\newtheorem{theorem}{Theorem}
\newtheorem{lemma}[theorem]{Lemma}
\newtheorem{conjecture}[theorem]{Conjecture}
\newtheorem{definition}[theorem]{Definition}
\newtheorem{proposition}[theorem]{Proposition}
\renewenvironment{proof}[1][\proofname]{{\noindent \bfseries #1}}{\qed}
\newcommand{\PP}{\mathbb{P}}
\newcommand{\EE}{\mathbb{E}}
\begin{document}

\title{A Proof of the Bar\'at-Thomassen Conjecture\thanks{The first and fourth authors were supported by ERC Advanced Grant GRACOL, project no. 320812. The second
author was supported by an FQRNT postdoctoral research grant and CIMI research fellowship. The fifth author was partially supported by the 
ANR Project STINT under Contract ANR-13-BS02-0007. \newline \indent Email addresses: \url{julien.bensmail.phd@gmail.com} (J. Bensmail), \url{ararat.harutyunyan@math.univ-toulouse.fr} (A. Harutyunyan), \url{tien-nam.le@ens-lyon.fr} (T.-N. Le), \url{martin.merker@uni-hamburg.de} (M. Merker), \url{stephan.thomasse@ens-lyon.fr} (S. Thomass\'e).}}

\author{Julien Bensmail$^a$, Ararat Harutyunyan$^b$, Tien-Nam Le$^c$, \\ Martin Merker$^a$, and St\'{e}phan Thomass\'{e}$^c$\\~\\
			\small $^a$Department of Applied Mathematics and Computer Science \\ \small Technical University of Denmark \\ \small DK-2800 Lyngby, Denmark\\~\\
			\small $^b$Institut de Math\'ematiques de Toulouse \\ \small Universit\'e de Toulouse Paul Sabatier \\ \small 31062 Toulouse Cedex 09, France\\~\\
			\small $^c$Laboratoire d'Informatique du Parall\'elisme \\ \small \'Ecole Normale Sup\'erieure de Lyon \\ \small 69364 Lyon Cedex 07, France}

\date{}

\maketitle

\begin{abstract}
The Bar\'at-Thomassen conjecture asserts that for every tree $T$ on $m$ edges, there exists a constant $k_T$ such that
every $k_T$-edge-connected graph with size divisible by $m$ can be edge-decomposed into copies of $T$.  
So far this conjecture has only been verified when $T$ is a path or when $T$ has diameter at most 4. 
Here we prove the full statement of the conjecture.
\end{abstract}

%%%%%%%%%%%%%%%%%%%%%%%%%%%%%%%%%%%%%%%%%%%%%%%%%%%%%%%%%%%%%%%%%%%%%
%%%%%%%%%%%%%%%%%%%%%%%%%%%%%%%%%%%%%%%%%%%%%%%%%%%%%%%%%%%%%%%%%%%%%
%%%%%%%%%%%%%%%%%%%%%%%%%%%%%%%%%%%%%%%%%%%%%%%%%%%%%%%%%%%%%%%%%%%%%
%%%%%%%%%%%%%%%%%%%%%%%%%%%%%%%%%%%%%%%%%%%%%%%%%%%%%%%%%%%%%%%%%%%%%
%%%%%%%%%%%%%%%%%%%%%%%%%%%%%%%%%%%%%%%%%%%%%%%%%%%%%%%%%%%%%%%%%%%%%
%%%%%%%%%%%%%%%%%%%%%%%%%%%%%%%%%%%%%%%%%%%%%%%%%%%%%%%%%%%%%%%%%%%%%
%%%%%%%%%%%%%%%%%%%%%%%%%%%%%%%%%%%%%%%%%%%%%%%%%%%%%%%%%%%%%%%%%%%%%
%%%%%%%%%%%%%%%%%%%%%%%%%%%%%%%%%%%%%%%%%%%%%%%%%%%%%%%%%%%%%%%%%%%%%
%%%%%%%%%%%%%%%%%%%%%%%%%%%%%%%%%%%%%%%%%%%%%%%%%%%%%%%%%%%%%%%%%%%%%
%%%%%%%%%%%%%%%%%%%%%%%%%%%%%%%%%%%%%%%%%%%%%%%%%%%%%%%%%%%%%%%%%%%%%
%%%%%%%%%%%%%%%%%%%%%%%%%%%%%%%%%%%%%%%%%%%%%%%%%%%%%%%%%%%%%%%%%%%%%
%%%%%%%%%%%%%%%%%%%%%%%%%%%%%%%%%%%%%%%%%%%%%%%%%%%%%%%%%%%%%%%%%%%%%
%%%%%%%%%%%%%%%%%%%%%%%%%%%%%%%%%%%%%%%%%%%%%%%%%%%%%%%%%%%%%%%%%%%%%
%%%%%%%%%%%%%%%%%%%%%%%%%%%%%%%%%%%%%%%%%%%%%%%%%%%%%%%%%%%%%%%%%%%%%
%%%%%%%%%%%%%%%%%%%%%%%%%%%%%%%%%%%%%%%%%%%%%%%%%%%%%%%%%%%%%%%%%%%%%
%%%%%%%%%%%%%%%%%%%%%%%%%%%%%%%%%%%%%%%%%%%%%%%%%%%%%%%%%%%%%%%%%%%%%

\section{Introduction} \label{section:introduction}

Unless stated otherwise, all graphs considered in this paper are finite and simple. 
Given a graph $G$, we denote by $V(G)$ and $E(G)$ its vertex and edge sets, respectively. 
For any subset $S$ of vertices or edges of $G$, we denote by $G[S]$ the subgraph of $G$ induced by $S$.

Given a connected graph $H$, we say that $G$ has an $H$-\textit{decomposition} if there exists a partition of $E(G)$ into parts isomorphic to $H$. 
When does a graph $G$ admit an $H$-decomposition? In general, this question is \textsf{NP}-complete whenever $H$ contains at least 3 edges, see~\cite{DT97}.
We focus on the case where we want to decompose $G$ into copies of a given tree $T$. 
A necessary condition for the existence of a $T$-decomposition is of course that $|E(T)|$ divides $|E(G)|$. 
There are many theorems and conjectures in graph theory stating that this condition is also sufficient in certain cases. 
By a result of Wilson~\cite{W76} this holds when $G$ is a sufficiently large complete graph, and there exist more general results showing that this is also true for graphs of large minimum degree. 
More precisely, for every tree $T$ there exists a constant $\varepsilon_T >0$ such that every graph $G$ of minimum degree $(1-\varepsilon_T )|V(G)|$ admits a $T$-decomposition, provided its size is divisible by the size of $T$ (see for example~\cite{BKLO16}).

A different line of research was started by Bar\'at and Thomassen~\cite{BT06}, when they observed in 2006 that $T$-decompositions are intimately related to nowhere-zero flows. Tutte conjectured that every 4-edge-connected graph admits a nowhere-zero 3-flow, but until recently it was not even known that any constant edge-connectivity suffices for this. 
Bar\'at and Thomassen showed that if every 8-edge-connected graph of size divisible by 3 admits a $K_{1,3}$-decomposition, then every 8-edge-connected graph admits a nowhere-zero 3-flow. 
Vice versa, they also showed that Tutte's 3-flow conjecture would imply that every 10-edge-connected graph with size divisible by 3 admits a $K_{1,3}$-decomposition. 
Motivated by this intrinsic connection, they conjectured the following.

\begin{conjecture}[Bar\'at-Thomassen Conjecture, \cite{BT06}] \label{conjecture:barat-thomassen}
For any tree $T$ on $m$ edges, there exists an integer $k_T$ such that every 
$k_T$-edge-connected graph with size divisible by $m$ has a $T$-decomposition.
\end{conjecture}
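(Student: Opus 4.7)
Since the Bar\'at-Thomassen conjecture is known when $T$ is a path, the natural strategy is to reduce the general tree case to the path case. Let $T$ be a tree on $m$ edges. I would first dispose of the star $T = K_{1,m}$ separately: in a sufficiently edge-connected graph, a $\mathbb{Z}_m$-flow/group-connectivity argument allows one to orient the edges so that every vertex has out-degree divisible by $m$, after which the out-edges at each vertex partition into copies of $K_{1,m}$. Assume henceforth that $T$ is not a star; let $P$ be a longest path in $T$, of length $\ell$, and write $T = P \cup B_1 \cup \cdots \cup B_k$, where the $B_i$ are pendant subtrees rooted at interior vertices of $P$. The central idea is to first decompose most of $G$ into copies of $P$ (using the path case), and then to ``grow'' each pendant $B_i$ onto each path-copy using the remaining edges of $G$.

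Concretely, I would use the high edge-connectivity of $G$ to select a spanning subgraph $G_0 \subseteq G$ with $|E(G_0)| = \ell \cdot q$, where $q = |E(G)|/m$, satisfying (i) $G_0$ is itself sufficiently edge-connected to admit a decomposition into $q$ paths of length $\ell$ by the path case, and (ii) the complement $R := E(G) \setminus E(G_0)$ has a degree distribution at each vertex compatible with serving as the attachment site for its fair share of pendant subtrees. Such a $G_0$ should exist by combining a Nash-Williams edge-partition argument with a flow construction that balances degrees modulo the relevant divisibility constants. Given a path decomposition $Q_1,\dots,Q_q$ of $G_0$, I would then randomly assign each pendant copy of $B_i$ to a random path $Q_j$ (attaching it at the corresponding interior vertex), and apply concentration inequalities together with the Lov\'asz Local Lemma to show that, with positive probability, every vertex's ``pendant budget'' in $R$ is respected and the union of all pendants exactly uses the edges of $R$.

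The main obstacle will be the exactness requirement: even if a random pendant assignment covers nearly all of $R$, a small discrepancy is almost inevitable, and an approximate $T$-decomposition does not yield an exact one. I would handle this by reserving, at the outset, a small \emph{absorbing subgraph} $A \subset G$, engineered so that $A$ together with any sufficiently small, suitably structured residual subgraph of $G \setminus A$ admits a $T$-decomposition. Constructing such an absorber in a graph of bounded (but high) edge-connectivity is the technical heart of the proof and, I expect, where most of the difficulty lies: one must respect the detailed branching structure of $T$ while keeping the absorber small enough that removing it does not destroy the edge-connectivity needed for the rest of the argument. A secondary complication is the case where $T$ has vertices of very large degree, since the pendant-attachment step then imposes strong local constraints; this may require splitting the argument by $\Delta(T)$, handling high-degree subcases with additional flow/orientation tools.
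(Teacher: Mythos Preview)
Your proposal is a strategy sketch, not a proof, and it diverges sharply from the paper's route. The paper never bootstraps from the path case. Instead it reduces to bipartite $G$, finds a $T$-\emph{equitable} edge-colouring $\phi:E(G)\to E(T)$ (so that at each vertex $v$ and each compatible $t\in V(T)$, all colours in $S(t)$ appear equally often), and then randomly groups the edges at each vertex into rainbow stars to obtain a $T$-\emph{pseudo}-decomposition. McDiarmid's inequality plus the Local Lemma guarantee that at every vertex only an $\varepsilon$-fraction of the resulting pseudo-trees fail to be isomorphic to $T$, and that the isomorphic ones have low conflict ratio. A deterministic switching operation, iterated $O(m)$ times and controlled by a second Local Lemma argument, then repairs all bad pseudo-copies. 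The $T$-equitable colouring is precisely what coordinates the local edge distributions with the branching structure of $T$ before any decomposition is attempted.

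The genuine gaps in your plan are the two steps you yourself mark as unresolved. First, absorption: the method you invoke is native to \emph{dense} hosts (minimum degree $cn$), where one can embed many interchangeable local gadgets. Here the only hypothesis is $k_T$-edge-connectivity, which says nothing about degree relative to $n$; a $k_T$-edge-connected graph can be locally as sparse as a long thickened cycle, and there is no known way to build a $T$-absorber of bounded relative size in such graphs. Second, and more structurally, your step (ii) hides a circularity: you want the leftover $R=E(G)\setminus E(G_0)$ to have, at every vertex, a degree pattern compatible with attaching the pendants $B_i$ at the \emph{specific interior vertices} of the path copies $Q_j$, but the $Q_j$ only exist after $G_0$ is fixed and the path case is invoked as a black box, so you cannot tailor $R$ to their locations in advance. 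Breaking this circularity is exactly what the $T$-equitable colouring does in the paper, and without an analogous device your path-plus-pendants scheme does not get off the ground.
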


When the conjecture was made, it was only known to hold in the trivial cases where $T$ has less than 3 edges. 
Since then, Conjecture~\ref{conjecture:barat-thomassen} has attracted growing attention, and it has now been verified for different families of trees such as stars~\cite{Tho12}, some bistars~\cite{BG14,Thb13}, and paths of a certain length~\cite{BMOW15+,Tha08,Thb08,Tha13}.
Very recently, breakthrough results were obtained by Merker~\cite{Mer15+}, who proved the conjecture for all trees of diameter at most~$4$, hence covering some of the results above, and by Botler, Mota, Oshiro, and Wakabayashi~\cite{BMOW14}, who proved the conjecture for all paths.
The latter result was improved by Bensmail, Harutyunyan, Le, and Thomass\'e~\cite{BHLT15+}, who showed that, for path-decompositions, large minimum degree is a sufficient condition provided the graph is 24-edge-connected.

The purpose of this paper is to verify Conjecture~\ref{conjecture:barat-thomassen} for every tree $T$, hence settling the conjecture in the affirmative. 

\begin{theorem} \label{theorem:result}
The Bar\'at-Thomassen conjecture is true.
\end{theorem}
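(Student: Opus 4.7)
My plan is to bootstrap from the path-decomposition theorem of Botler, Mota, Oshiro, and Wakabayashi, together with its degree-refined version by Bensmail, Harutyunyan, Le, and Thomass\'e. The idea is to pick a long path $P$ inside $T$ (a ``spine'') and write $T$ as the union of $P$ together with a bounded number of small branch-subtrees $T_1, \ldots, T_r$ rooted at interior vertices of $P$. I would then decompose most of $G$ into copies of $P$ using the path-decomposition results, and finish by growing the branches $T_i$ on top of each spine using a carefully reserved, sparse but highly edge-connected, subgraph.

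The first step is a reduction to bounded maximum degree: using the high edge-connectivity of $G$ together with Nash-Williams-style spanning-tree packing and a greedy peeling of $T$-copies through high-degree vertices, I would strip $T$-copies until the maximum degree of $G$ is bounded by a function of $m$, while preserving divisibility of $|E(G)|$ by $m$ and high edge-connectivity. Next I would partition $E(G)$ into a ``bulk'' $G'$ whose size is divisible by $|E(P)|$ and that is still highly edge-connected, and a sparse ``reservoir'' $R$ of bounded density but high edge-connectivity, meant to host the branches. Applying the path-decomposition theorem to $G'$ yields a decomposition of $G'$ into copies of $P$.

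The final step converts this path-decomposition of $G'$ into a $T$-decomposition of $G$ by attaching to each copy of $P$ the branches $T_1, \ldots, T_r$ at their correct positions, using edges of $R$. The main obstacle is precisely this step: performing the attachment simultaneously for all spines without two spines competing for the same edges of $R$, while ensuring that every edge of $R$ is used exactly once at the end. I would handle this by a probabilistic round-and-fix strategy: assign branches randomly and independently so that in expectation each edge of $R$ is used the right number of times, then exploit the high edge-connectivity of $R$ to perform local swaps absorbing the small discrepancies, finishing in a true partition. Orchestrating these simultaneous local attachments, and proving that the absorption can indeed fix all discrepancies, is the technical heart of the proof.
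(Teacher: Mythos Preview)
Your outline is a genuinely different strategy from the paper's, but the final ``attach the branches from the reservoir'' step hides a problem that is essentially as hard as the original conjecture, and your sketch does not resolve it. After you decompose $G'$ into copies of the spine $P$, each spine has prescribed interior vertices at which prescribed rooted subtrees $T_1,\ldots,T_r$ must be grown inside $R$. For this to succeed \emph{exactly}, at every vertex $v$ the multiset of rooted subtrees demanded at $v$ (summed over all spines passing through $v$ in the right position) must be realisable inside $R$, and all these realisations together must partition $E(R)$. Even in the simplest nontrivial case where each $T_i$ is a single edge (so $T$ is a caterpillar), this already forces, at every vertex $v$, the $R$-degree of $v$ to equal a quantity determined entirely by how the $P$-decomposition of $G'$ routes spines through $v$; there is no mechanism in your plan to make these two numbers match. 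Your ``round-and-fix'' absorption is asserted but not constructed: absorption arguments require pre-built gadgets with very specific switching properties, and here you would need, at every vertex, absorbers that trade a rooted copy of some $T_i$ for another rooted tree of the same size at the same root while staying inside $R$. No such gadgets are provided, and it is not clear they exist in a merely highly edge-connected $R$. The ``reduction to bounded maximum degree'' step is also unsupported: greedily removing $T$-copies through high-degree vertices has no reason to preserve high edge-connectivity, and you give no argument that such copies can always be found.

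By contrast, the paper avoids the spine-plus-branches decomposition entirely. It first reduces to bipartite $G$ with one side having all degrees divisible by $m$ (Theorems~\ref{theorem:bipartite} and~\ref{theorem:reduction1}), then uses Merker's $T$-equitable colouring (Theorem~\ref{thm:merker}) to colour $E(G)$ by $E(T)$ so that at every vertex the colour multiplicities are balanced. From this colouring one obtains, by random grouping of edges into rainbow stars, a $T$-\emph{pseudo}-decomposition (each part is a homomorphic image of $T$) in which, at every vertex, almost all parts are genuine copies of $T$ and the isomorphic copies have low ``conflict ratio'' (Lemma~\ref{dense}). The non-isomorphic pseudo-trees are then repaired one vertex-label $t_i$ at a time by an explicit $i$-switch with a nearby isomorphic copy, where the Local Lemma (Lemma~\ref{lemma-switchset}) guarantees that such switching partners can be chosen injectively while keeping the remaining pool of isomorphic copies well spread. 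The key point is that the paper never tries to grow $T$ from a smaller subtree inside a separate reservoir; it works directly with global homomorphic copies of $T$ and fixes defects by local edge-exchanges between two such copies, which is exactly the structure your plan lacks.
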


This paper builds upon previous work on the Bar\'at-Thomassen conjecture. It was shown by Thomassen in~\cite{Thb13}, and independently by Bar\'at and Gerbner in~\cite{BG14}, that it is sufficient to verify Conjecture~\ref{conjecture:barat-thomassen} for bipartite graphs $G$.

\begin{theorem}\emph{\cite{BG14,Thb13}}\label{theorem:bipartite}
Let $T$ be a tree on $m$ edges. The following two statements are equivalent:
\begin{enumerate}
	\item[(1)] There exists a natural number $k_T$ such that every $k_T$-edge-connected graph with size divisible by $m$ has a $T$-decomposition.
	\item[(2)] There exists a natural number $k'_T$ such that every $k'_T$-edge-connected bipartite graph with size divisible by $m$ has a $T$-decomposition.
\end{enumerate}
\end{theorem}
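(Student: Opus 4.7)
The direction $(1) \Rightarrow (2)$ is immediate: set $k'_T := k_T$, and observe that any bipartite graph satisfying the hypothesis of (2) is in particular a graph satisfying the hypothesis of (1), so it admits a $T$-decomposition by (1). I therefore focus on the substantive direction $(2) \Rightarrow (1)$.

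Assume (2) holds with constant $k'_T$. The plan is to show that from any sufficiently edge-connected graph $G$ with $m \mid |E(G)|$, one can peel off an edge-disjoint collection of copies of $T$ whose removal leaves a highly edge-connected bipartite graph; then hypothesis (2) finishes the job. Concretely, I aim to produce a bipartition $(A, B)$ of $V(G)$ and a subgraph $F \subseteq G$ with: (i) $F$ decomposes into edge-disjoint copies of $T$; (ii) every edge of $G$ lying inside $A$ or inside $B$ belongs to $F$; and (iii) $G - E(F)$ is $k'_T$-edge-connected. Given these, $|E(F)|$ is divisible by $m$ by (i), hence so is $|E(G-F)|$; by (ii) and (iii), $G - E(F)$ is a bipartite $k'_T$-edge-connected graph of size divisible by $m$, so (2) yields a $T$-decomposition of $G - E(F)$, which combined with the decomposition of $F$ produces the desired $T$-decomposition of $G$.

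To construct $(A, B)$ and $F$, I would first fix a bipartition, for instance via a max cut, which guarantees that every vertex has at least half of its incident edges crossing $(A, B)$. Then, for each ``bad'' edge $e$ lying inside $A$ or inside $B$, I would grow a copy $T_e$ of $T$ containing $e$ inside $G$. The high edge-connectivity of $G$ should supply ample room: Nash-Williams's theorem gives $\lfloor k_T/2 \rfloor$ edge-disjoint spanning trees of $G$, out of which one can extract, for each bad edge, a tree-structured neighborhood in which to embed $T_e$. The $T_e$ must be made pairwise edge-disjoint and together cover all bad edges; then $F := \bigcup_e E(T_e)$ is the desired subgraph.

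The main obstacle is condition (iii). Each bad edge consumes up to $m$ edges of $G$ via its copy $T_e$, and in the worst case these deletions could pile up on a small cut of $G$ and destroy its edge-connectivity. Overcoming this requires a global balancing argument: either a probabilistic choice of the bipartition together with a random embedding of the $T_e$, combined with concentration for all cuts simultaneously, or a matroidal/flow-based argument that spreads the copies $T_e$ across the graph so that no cut is crossed by too many of them. I expect the quantitative relationship to be of the form $k_T = C \cdot m \cdot k'_T$ for an absolute constant $C$, arising from the Nash-Williams bound together with an amortization over the $m$ edges consumed by each copy of $T$.
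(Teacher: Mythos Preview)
The paper does not prove this theorem; it is quoted from \cite{BG14,Thb13} and used as a black box in the proof of Theorem~\ref{theorem:result}. So there is no ``paper's own proof'' to compare against here.

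That said, your outline matches the strategy of the cited proofs, but it is explicitly incomplete: you identify condition (iii) as the main obstacle and then only speculate about probabilistic or matroidal balancing arguments to resolve it. The actual proofs in \cite{BG14,Thb13} dispose of (iii) far more simply, and this is the idea you are missing. By Nash--Williams, a $2k$-edge-connected graph contains $k$ edge-disjoint spanning trees. One first sets aside $k'_T$ of these spanning trees and never touches them again; their union is a $k'_T$-edge-connected spanning subgraph, so \emph{anything} you delete from the rest of $G$ leaves a $k'_T$-edge-connected graph. All of the work---choosing the bipartition and covering the monochromatic edges with edge-disjoint copies of $T$---then takes place entirely outside the reserved trees, so (iii) is automatic and no global balancing is needed. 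The bipartition itself is taken from one further spanning tree $T_0$ (so $T_0$ contributes no bad edges), and a bounded number of additional spanning trees supply, at each vertex, enough edge-disjoint edges to extend every bad edge to a copy of $T$; edge-disjointness of the resulting copies follows because distinct bad edges draw their extensions from distinct positions in the spanning trees. This yields $k_T$ linear in $k'_T + m$, not the $C\, m\, k'_T$ you conjecture.

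In short: your plan is sound, but the step you flag as hard is in fact the easy one once you reserve spanning trees for connectivity up front rather than trying to control cuts after the fact.
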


An important tool in the study of Conjecture~\ref{conjecture:barat-thomassen} is a recent result on modulo-$k$ orientations, which was proved by Thomassen in~\cite{Tho12}. 
He showed that the edges of a highly edge-connected graph can be oriented so that any prescribed out-degrees modulo $k$ are realized.
This was formerly known as Jaeger's weak $k$-flow conjecture and it immediately implies that Conjecture~\ref{conjecture:barat-thomassen} holds for all stars. 
Moreover, his result implies that every 8-edge-connected graph admits a nowhere-zero 3-flow. In~\cite{LTWZ13}, the method was refined to show that also 6-edge-connected graphs admit a nowhere-zero 3-flow. The question whether edge-connectivity 4 suffices remains open.
Another application of these modulo-$k$ orientations is the following decomposition result, which was shown by Thomassen in~\cite{Thb13} and also applied in~\cite{BMOW15+} and~\cite{Mer15+}.

\begin{theorem}\emph{\cite{Thb13}} \label{theorem:reduction1}
Let $G$ be a bipartite graph with partition classes $A_1$ and $A_2$, and size divisible by $m$. If $G$ is $(4\lambda + 6m)$-edge-connected, then $G$ can be decomposed into two $\lambda$-edge-connected graphs $G_1$ and $G_2$ such that $d(v,G_i)$ is divisible by $m$ for every $v$ in $A_i$.
\end{theorem}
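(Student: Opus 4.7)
The plan is to partition the edges of $G$ into three edge-disjoint spanning parts: two ``backbones'' $H_1$ and $H_2$ that will enforce the $\lambda$-edge-connectivity of the two pieces of the decomposition, and a ``correction'' subgraph $H$ whose edges are subsequently distributed between $G_1$ and $G_2$ via an orientation chosen so as to secure the congruence conditions on the vertex degrees. The two key tools are the Nash-Williams/Tutte theorem on packing edge-disjoint spanning trees and Thomassen's theorem prescribing out-degrees modulo $m$ in sufficiently edge-connected graphs.

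Since $G$ is $(4\lambda+6m)$-edge-connected, Nash-Williams/Tutte supplies $2\lambda+3m$ edge-disjoint spanning trees in $G$. I would bundle $\lambda$ of them into $H_1$, another $\lambda$ into $H_2$, and the remaining $3m$ into $H$, and throw any edges of $G$ lying outside these trees into $H_1$. By construction, $H_1$ and $H_2$ each contain $\lambda$ edge-disjoint spanning trees and are therefore $\lambda$-edge-connected, while $H$ contains $3m$ edge-disjoint spanning trees, which is the amount of edge-connectivity required to invoke Thomassen's modulo-$m$ orientation theorem.

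For each $v\in A_i$, set the target residue $r_v\equiv -d(v,H_i)\pmod m$. The global compatibility condition reads
\[
\sum_{v\in V(G)} r_v \;\equiv\; -|E(H_1)|-|E(H_2)| \;\equiv\; |E(H)|-|E(G)| \;\equiv\; |E(H)| \pmod m,
\]
using that $G$ is bipartite and that $m\mid|E(G)|$. Hence Thomassen's theorem yields an orientation of $H$ in which the in-degree of every $v$ is congruent to $r_v$ modulo $m$. Every edge of $H$ has exactly one endpoint in $A_1$ and one in $A_2$; I would assign such an edge to $G_i$ precisely if it is oriented towards its endpoint in $A_i$. Setting $G_1:=H_1\cup\{e\in E(H):\ e\text{ points into }A_1\}$ and $G_2:=H_2\cup\{e\in E(H):\ e\text{ points into }A_2\}$ produces the required edge-decomposition. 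For $v\in A_i$, the contribution of $H$ to $d(v,G_i)$ equals $d^-_H(v)\equiv r_v\pmod m$, so $d(v,G_i)\equiv d(v,H_i)+r_v\equiv 0\pmod m$, and each $G_i\supseteq H_i$ is $\lambda$-edge-connected.

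The one non-routine step is the appeal to Thomassen's modulo-$m$ orientation theorem, which demands that $H$ be highly enough edge-connected; this is exactly what the $6m$ term in the edge-connectivity hypothesis buys, by endowing $H$ with its $3m$ edge-disjoint spanning trees. Everything else reduces to a careful accounting of degrees and to checking that the divisibility of $|E(G)|$ by $m$ produces the compatibility condition needed by the orientation theorem.
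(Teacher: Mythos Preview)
The paper does not prove this theorem; it quotes it from Thomassen~\cite{Thb13} and uses it as a black box in the proof of Theorem~\ref{theorem:result}. Your argument is correct and is essentially the proof given in~\cite{Thb13}: split off $2\lambda$ spanning trees via Nash--Williams to guarantee the edge-connectivity of $G_1$ and $G_2$, reserve the remaining $3m$ spanning trees as the correction graph $H$, and then distribute the edges of $H$ according to a modulo-$m$ orientation so that the degree congruences on $A_1$ and $A_2$ come out right. Your parity/compatibility check $\sum_v r_v\equiv |E(H)|\pmod m$ is exactly the place where bipartiteness and $m\mid |E(G)|$ are used, and your observation that each edge of $H$ points into exactly one $A_i$ is what makes the assignment well-defined.

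The only point worth flagging is the quantitative input to the orientation step. What Thomassen actually uses in~\cite{Thb13} is not ``$3m$-edge-connectivity'' per se but the presence of $3m$ edge-disjoint spanning trees in $H$, which is precisely what the $6m$ in the hypothesis delivers; his modulo-$m$ orientation lemma is phrased directly in terms of spanning trees. So your sentence ``this is exactly what the $6m$ term \ldots\ buys, by endowing $H$ with its $3m$ edge-disjoint spanning trees'' is on target, and the proof goes through as written.
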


By Theorems~\ref{theorem:bipartite} and~\ref{theorem:reduction1}, it is sufficient to prove Conjecture~\ref{conjecture:barat-thomassen} for bipartite graphs $G$ on vertex classes $A$ and $B$, where all vertices in $A$ have degree divisible by $m$, the size of $T$.

Let $T_A$ and $T_B$ denote the vertex classes of a bipartition of $T$. We may assume that $T_B$ contains a leaf.
The $T$-decompositions we are going to construct will respect the bipartitions of $G$ and $T$ in the
sense that the vertices corresponding to $T_A$ will lie in $A$ for each copy of $T$. We say that vertices $v\in V(G)$ and $t\in V(T)$ are \emph{compatible} if $v\in A$ and $t\in T_A$, or $v\in B$ and $t\in T_B$.

We study a specific kind of edge-colouring of $G$, that was introduced in~\cite{Mer15+}. Assuming $G$ is (improperly) edge-coloured, we denote by $d_i(v)$ the degree of vertex $v$ in colour $i$. 
For $t\in V(T)$, let $S(t)$ denote the set of edges incident with $t$. 

\begin{definition}
An edge-colouring $\phi : E(G) \rightarrow E(T)$ is called $T$-\textbf{equitable}, if for any compatible vertices $v\in V(G), t\in V(T)$ and $j,k\in S(t)$, we have $d_j(v)=d_k(v)$.
\end{definition}

It was shown by Merker~\cite{Mer15+}, using modulo-$k$ orientations, that highly edge-connected graphs admit $T$-equitable edge-colourings.

\begin{theorem}[Theorem 3.4 in~\cite{Mer15+}]\label{thm:merker}
For all natural numbers $m$ and $L$ there exists a natural number $f(m,L)$ such that the following holds:
If $G$ is an $f(m,L)$-edge-connected bipartite graph with bipartite classes $A$ and $B$ where all vertices in $A$ have degree divisible by $m$, and $T$ is a tree on $m$ edges with bipartite classes $T_A$ and $T_B$ where $T_B$ contains a leaf, then $G$ admits a $T$-equitable colouring where the minimum degree in each colour is at least $L$.
\end{theorem}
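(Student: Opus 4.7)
\emph{Overall strategy.} A $T$-equitable colouring with minimum colour-degree at least $L$ corresponds exactly to a decomposition $E(G) = \bigsqcup_{c \in E(T)} G_c$ into $m$ edge-disjoint spanning subgraphs (one per edge $c = \{t_A, t_B\}$ of $T$), in which every $v \in A$ has degree $x_{v, t_A}$ in $G_c$ and every $v \in B$ has degree $y_{v, t_B}$ in $G_c$, with all values $x, y \geq L$. I would construct such a decomposition by induction on $m$, peeling off one colour class at a time, using Thomassen's modulo-$k$ orientation theorem as the main tool for the single-colour extraction step.

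\emph{Targets and inductive step.} For each $v \in A$, set $x_{v,t} := d(v)/m$ for every $t \in T_A$; this is an integer by the divisibility hypothesis on $A$. For each $v \in B$, write $d(v) = m q_v + r_v$ with $0 \leq r_v < m$, set $y_{v,t} := q_v$ for every $t \in T_B \setminus \{\ell\}$, and set $y_{v,\ell} := q_v + r_v$, where $\ell$ is a fixed leaf of $T$ lying in $T_B$. All target values exceed $L$ provided $d(v) \gg mL$, which is guaranteed by the edge-connectivity. Let $c^* := \{t^*, \ell\}$, where $t^*$ is the neighbour of $\ell$ in $T$. I would extract a subgraph $G_{c^*} \subseteq G$ in which every $v \in A$ has degree $d(v)/m$ and every $v \in B$ has degree $y_{v, \ell}$. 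Removing $G_{c^*}$ leaves a graph $G'$ whose degrees are $(m-1)\, d(v)/m$ at $v \in A$ and $(m-1) q_v$ at $v \in B$, both divisible by $m-1$. The residual tree $T' := T - \ell$ has $m-1$ edges; after possibly relabelling the bipartition (harmless, since now \emph{both} sides of $G'$ satisfy the divisibility condition modulo $m-1$), $T'$ still has a leaf in the ``$B$-part'', and the inductive hypothesis applies.

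\emph{Main obstacle.} The crux is the single-colour extraction: constructing $G_{c^*}$ with the prescribed degree sequence at every vertex, while preserving enough edge-connectivity in $G \setminus G_{c^*}$ to continue the induction. This is exactly where Thomassen's modulo-$k$ orientation theorem enters. One would build an auxiliary graph derived from $G$ (for instance a suitable bipartite blow-up or an augmentation that encodes the $y_{v, \ell}$ residues), apply the theorem with a well-chosen modulus, and translate the resulting orientation into a choice of edges for $G_{c^*}$. The principal technical difficulty is designing this auxiliary graph so that the degree equations match a modulo-$k$ prescription exactly (the leaf of $T_B$ is essential here, since its coordinate $y_{v, \ell}$ is the unique ``absorber'' of modular slack on the $B$-side), and then bounding the loss of edge-connectivity per extraction so that, by choosing $f(m, L)$ recursively with enough slack, edge-connectivity survives all $m$ rounds of the induction.
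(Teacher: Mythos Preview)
This statement is not proved in the paper; it is quoted as Theorem~3.4 of~\cite{Mer15+} and used as a black box, so there is no proof in the present paper to compare your attempt against.

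On the sketch itself: the overall shape---strip off colour classes one at a time, using Thomassen's modulo-$k$ orientation theorem to hit the prescribed degrees in each round while preserving edge-connectivity---is the natural approach. One point needs care. If you literally invoke the \emph{theorem statement} as your inductive hypothesis on $(G',T')$, the resulting $T'$-equitable colouring of $G'$ only guarantees that the degrees are constant over each star $S_{T'}(t)$; when $|T_A|\ge 2$ it does \emph{not} force the specific value $d(v)/m$ on the edges of $S_{T'}(t^*)$ at $v\in A$, so gluing $G_{c^*}$ back on can break equitability at $t^*$ (the colours in $S_T(t^*)=S_{T'}(t^*)\cup\{c^*\}$ need to agree). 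The remedy is to strengthen the induction to ``any feasible target vector $(x_{v,t},y_{v,t})$ with entries at least $L$ can be realised'', or equivalently to iterate a single-colour extraction lemma $m$ times with the targets fixed in advance rather than recurse on the theorem. Your phrase ``peeling off one colour class at a time'' may already mean the latter; if so, make that explicit. Finally, the step you flag as the ``main obstacle'' is in fact almost the entire proof: achieving \emph{exact} (not merely modular) degrees while keeping $G\setminus G_{c^*}$ highly edge-connected requires a reservation device (for instance, setting aside edge-disjoint spanning trees via Nash--Williams for connectivity and then balancing degrees on the rest) layered on top of the orientation theorem, and this is where the real work lies.
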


Notice that since we put no constraints on the degrees in $B$, it is necessary that the greatest common divisor of the degrees in $T_B$ is $1$ if we want to construct a $T$-equitable colouring. For this reason we chose the bipartition of $T$ so that $T_B$ contains a leaf.

If there exists a $T$-decomposition of a bipartite graph $G$ where all copies of $T$ are oriented the same way (with respect to the bipartite classes), then this gives rise to a $T$-equitable colouring of $G$. Vice versa, a $T$-equitable colouring can also be used to construct a $T$-decomposition. This was done in~\cite{Mer15+} for the case that the girth of $G$ is at least the diameter of $T$, and also in general for trees of diameter at most 4. 

In this paper we use probabilistic methods, inspired from those used in~\cite{BHLT15+}, to show that a $T$-equitable colouring can be turned into a $T$-decomposition whenever the minimum degree in each colour is large enough.

\begin{theorem}\label{theorem:main}
Let $T$ be a tree on $m$ edges and let $G$ be a bipartite graph admitting a $T$-equitable colouring. If the minimum degree in each colour is at least $10^{50m}$, then $G$ has a $T$-decomposition.
\end{theorem}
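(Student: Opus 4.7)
The plan is to encode a $T$-decomposition as a system of \emph{local matchings}. For each vertex $v \in V(G)$ and each vertex $t \in V(T)$ compatible with $v$, write $N(v,t)$ for the common value of $d_e(v)$ as $e$ ranges over $S(t)$ (well-defined by $T$-equitability). At each such pair $(v,t)$ we choose a partition of the edges at $v$ with colours in $S(t)$ into $N(v,t)$ groups, each containing exactly one edge of each colour in $S(t)$. Because $T$ is a tree, a choice of such a partition at every $(v,t)$ induces a canonical edge-partition of $E(G)$ into subgraphs that are homomorphic images of $T$: starting from any edge, trace the partitions outward to recover an image. The whole problem reduces to choosing the partitions so that every image is injective, i.e.\ is a genuine copy of $T$.

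My approach is to pick every partition independently and uniformly at random and bound the probability of a \emph{collision}: two distinct $t \neq t'$ in $V(T)$ mapped to the same $v \in V(G)$ inside a single image. Such a collision forces the $T$-path $t = t_0, t_1, \ldots, t_\ell = t'$ (of some length $\ell \leq m$) to be realised by a closed walk $v = u_0, u_1, \ldots, u_\ell = v$ in $G$ with each edge $u_i u_{i+1}$ of colour $t_i t_{i+1}$; it occurs only if, at each interior vertex $u_i$, the partition at $u_i$ associated with $t_i$ pairs the two consecutive walk-edges into a common group. This is an independent event of probability $1/N(u_i, t_i) \leq L^{-1}$, where $L = 10^{50m}$. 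A straightforward aggregation then shows that the expected number of improper images is at most roughly $m^2 |E(G)|/L$, so only an exponentially small fraction of candidate copies are improper.

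The main obstacle is passing from ``almost all candidate copies are proper'' to ``every copy is proper'', since a single improper image ruins the decomposition. I would handle this by appending a local correction step: for each collision, find a nearby proper copy and swap groups between the two to dissolve the collision without introducing new ones. The hypothesis $L = 10^{50m}$ provides ample slack for such swaps, because each local modification only affects walks through a bounded neighbourhood in $T$. An attractive alternative is an absorption strategy: reserve a sparse sub-collection of edges as an ``absorber'' before the random round, apply the random matching to the remainder, and use the absorber to patch up residual bad edges. In either design, the delicate step is verifying that the cleanup does not cascade, and this is where I expect to spend most of the work; the astronomical constant $L = 10^{50m}$ is what pays for this bookkeeping.
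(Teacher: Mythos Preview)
Your two-phase outline---random local partitions giving a pseudo-decomposition with few defects, then a swap stage to repair the defects---is exactly the architecture the paper uses. But the part you flag as ``where I expect to spend most of the work'' is where all the content lies, and as written there is a genuine gap, not just missing bookkeeping.

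First, a global first-moment bound on improper images is not what the repair step consumes. To swap a bad copy $H$ at a vertex $v$, you need that \emph{at $v$} the proper copies vastly outnumber the improper ones, i.e.\ $d_{\mathcal{H}}(v|t) \le \varepsilon\, d_{\mathcal{I}}(v|t)$ simultaneously for every compatible $(v,t)$. That is a Local Lemma statement, and to run the Local Lemma you must bound the dependency graph; the paper does this by first chopping each $N_{S(t)}(v)$ into \emph{fans} of bounded size and randomising only within fans, so each bad event depends on $O((cm)^{2m})$ others. Your ``partition independently and uniformly at random'' has unbounded dependency and will not go through. Second, and more importantly, nothing in your sketch prevents the cascade you yourself worry about. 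The paper's mechanism is an invariant it calls the \emph{conflict ratio}: the maximum over $u\neq v$ of the fraction of copies in $N_{\mathcal{I}}(v|t)$ containing $u$. Low conflict ratio guarantees that for any bad $H$ at $v$ there is a proper copy meeting $H$ only at $v$, so an $i$-switch (cut both along $e_i$ and recombine) produces two $i$-good copies. The non-cascade argument is a second Local Lemma application showing the swap set can be chosen so that removing it from $\mathcal{I}$ degrades the conflict ratio only by a bounded factor; this lets the repair iterate over $i=4,\ldots,m$. Without some overlap invariant of this kind, ``find a nearby proper copy and swap'' is not well-defined after the first round, and there is no reason your correction terminates. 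The astronomical $L$ does not by itself buy this; you need to track and preserve a structural quantity through the rounds.
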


Combined with the previous theorems, Theorem \ref{theorem:main} completes the proof of the Bar\'at-Thomassen conjecture.

\begin{proof}[Proof of Theorem~\ref{theorem:result}:]
By Theorem~\ref{theorem:bipartite}, we may assume that $G$ is bipartite. We show that every $(4f(m,10^{50m})+6m)$-edge-connected bipartite graph has a $T$-decomposition, where $f$ is the function given by Theorem~\ref{thm:merker}. By Theorem~\ref{theorem:reduction1} we can decompose $G$ into two spanning $f(m,10^{50m})$-edge-connected graphs $G_1$ and $G_2$, such that in one side of the bipartition of each $G_i$ all vertices have degree divisible by $m$. By Theorem~\ref{thm:merker}, we can find a $T$-equitable colouring of $G_i$ in which the minimum degree in each colour is at least $10^{50m}$. This colouring can be turned into a $T$-decomposition by Theorem~\ref{theorem:main}.
\end{proof}

%%%%%%%%%%%%%%%%%%%%%%%%%%%%%%%%%%%%%%%%%%%%%%%%%%%%%%%%%%%%%%%%%%%%%
%%%%%%%%%%%%%%%%%%%%%%%%%%%%%%%%%%%%%%%%%%%%%%%%%%%%%%%%%%%%%%%%%%%%%
%%%%%%%%%%%%%%%%%%%%%%%%%%%%%%%%%%%%%%%%%%%%%%%%%%%%%%%%%%%%%%%%%%%%%
%%%%%%%%%%%%%%%%%%%%%%%%%%%%%%%%%%%%%%%%%%%%%%%%%%%%%%%%%%%%%%%%%%%%%
%%%%%%%%%%%%%%%%%%%%%%%%%%%%%%%%%%%%%%%%%%%%%%%%%%%%%%%%%%%%%%%%%%%%%
%%%%%%%%%%%%%%%%%%%%%%%%%%%%%%%%%%%%%%%%%%%%%%%%%%%%%%%%%%%%%%%%%%%%%
%%%%%%%%%%%%%%%%%%%%%%%%%%%%%%%%%%%%%%%%%%%%%%%%%%%%%%%%%%%%%%%%%%%%%
%%%%%%%%%%%%%%%%%%%%%%%%%%%%%%%%%%%%%%%%%%%%%%%%%%%%%%%%%%%%%%%%%%%%%
%%%%%%%%%%%%%%%%%%%%%%%%%%%%%%%%%%%%%%%%%%%%%%%%%%%%%%%%%%%%%%%%%%%%%
%%%%%%%%%%%%%%%%%%%%%%%%%%%%%%%%%%%%%%%%%%%%%%%%%%%%%%%%%%%%%%%%%%%%%
%%%%%%%%%%%%%%%%%%%%%%%%%%%%%%%%%%%%%%%%%%%%%%%%%%%%%%%%%%%%%%%%%%%%%
%%%%%%%%%%%%%%%%%%%%%%%%%%%%%%%%%%%%%%%%%%%%%%%%%%%%%%%%%%%%%%%%%%%%%
%%%%%%%%%%%%%%%%%%%%%%%%%%%%%%%%%%%%%%%%%%%%%%%%%%%%%%%%%%%%%%%%%%%%%
%%%%%%%%%%%%%%%%%%%%%%%%%%%%%%%%%%%%%%%%%%%%%%%%%%%%%%%%%%%%%%%%%%%%%
%%%%%%%%%%%%%%%%%%%%%%%%%%%%%%%%%%%%%%%%%%%%%%%%%%%%%%%%%%%%%%%%%%%%%
%%%%%%%%%%%%%%%%%%%%%%%%%%%%%%%%%%%%%%%%%%%%%%%%%%%%%%%%%%%%%%%%%%%%%

\section{Definitions and sketch of proof} \label{section:settings}

In our proof of Theorem~\ref{theorem:main}, a $T$-decomposition of a graph $G$ is obtained in two steps, which we describe more formally below.
In the first step we construct a decomposition of $G$ into so-called pseudo-copies of $T$, which are subgraphs that are in some sense homomorphic to $T$. Such a decomposition, which we call a $T$-pseudo-decomposition, can easily be obtained from a $T$-equitable colouring, see also~\cite{Mer15+}. Instead of choosing any such decomposition, we use probabilistic methods to find one in which the vast majority of pseudo-copies at every vertex are isomorphic to $T$. The details of this step can be found in Section 3.
In the second step, we use these isomorphic copies to repair the non-isomorphic copies of $T$ by making subgraph switches. While the switching itself is a deterministic operation, we again use probabilistic methods to find a suitable set of isomorphic copies. This part of the proof is detailed in Section 4.

\subsection*{Step 1: Finding a good $T$-pseudo-decomposition}

Let $G$ be a graph with a $T$-equitable colouring.
Recall that $S(t)$ denotes the set of edges incident with a vertex $t\in V(T)$. 
Let us denote by $N_i(v)$ the set of edges coloured $i$ incident with $v\in V(G)$. 
Furthermore, we set $N_{S(t)}(v) := \bigcup_{i\in S(t)} N_i(v)$ for every $t\in V(T)$ compatible with $v$.
Since the edge-colouring of $G$ is $T$-equitable, we have that $|N_{S(t)}(v)|$ is divisible by $|S(t)|$ for every compatible $v\in V(G)$, $t\in V(T)$. Thus, we partition $N_{S(t)}(v)$ into stars of size $|S(t)|$ that contain each of the colours in $S(t)$ exactly once. Let $\mathcal{S}$ be the collection of stars we get after having done this for every $v\in V(G)$ and compatible $t\in V(T)$. Consider an auxiliary graph $G_{\mathcal{S}}$ whose vertices are the stars in $\mathcal{S}$, and where two vertices are joined by an edge whenever the corresponding stars have an edge in common. By construction, each connected component of $G_{\mathcal{S}}$ is a tree isomorphic to $T$. For every connected component in $G_{\mathcal{S}}$, we take the union of all the stars corresponding to it in $G$. It is easy to see that this decomposes $G$ into parts of the same size as $T$. In fact even more is true: Each part is isomorphic to a graph obtained from $T$ by identifying vertices. This motivates the following definition.

\begin{definition}
A graph $H$ is a \textbf{pseudo-copy} of $T$, if there exists a surjective graph homomorphism $h:V(T)\rightarrow V(H)$ that induces a bijection between $E(T)$ and $E(H)$.
\end{definition}

In other words, a graph $H$ is a pseudo-copy of $T$, if it is isomorphic to a multigraph obtained from $T$ by identifying vertices and keeping all edges. 
We also refer to pseudo-copies of $T$ as \textit{pseudo-trees}. 
The term \textit{$T$-pseudo-decomposition} denotes a decomposition where each part is a pseudo-copy of $T$. 
Given a $T$-equitable colouring of $G$, the construction above results in a $T$-pseudo-decomposition of $G$. 

Notice that it might be the case that a graph $H$ can be considered as a pseudo-copy of $T$ in different ways if there exists more than one homomorphism from $T$ to $H$ with the required properties. However, we will only consider homomorphisms that induce the same edge-colouring of $H$ as the given $T$-equitable colouring. Furthermore, we only consider pseudo-copies of $T$ in $G$ that respect the bipartition in the sense that vertices corresponding to $T_A$ always lie in $A$.

Let $\mathcal{P}$ be a $T$-pseudo-decomposition of $G$. For every compatible $v\in V(G)$ and $t\in V(T)$, we denote by $N_\mathcal{P}(v|t)$ the set of pseudo-trees in $\mathcal{P}$ in which $v$ is the image of $t$.
Let $d_\mathcal{P}(v|t)= | N_\mathcal{P}(v|t)|$. 
Clearly, for any two different vertices $u$ and $v$ of $G$, we have $N_\mathcal{P}(u|t)\cap N_\mathcal{P}(v|t)=\emptyset$. Notice also that 
$$\bigcup_{v\in G} N_\mathcal{P}(v|t)=\mathcal{P}$$
for every $t\in V(T)$.

So far we have explained how $T$-pseudo-decompositions can be obtained from a $T$-equitable colouring. We denote such a resulting $T$-pseudo-decomposition $\mathcal{P}$ of $G$ by $\mathcal{H}\cup \mathcal{I}$, where $\mathcal{I}$ denotes the collection of pseudo-copies that are isomorphic to $T$ and $\mathcal{H}$ denotes the collection of the remaining pseudo-copies.

If the minimum degree in each colour is large in the $T$-equitable colouring, then there are many possibilities at every vertex to decompose its incident edges into stars. 
Using probabilistic methods, we find a $T$-pseudo-decomposition where $d_\mathcal{H}(v|t)\le \varepsilon d_{\mathcal{I}}(v|t)$ for some given $\varepsilon >0$ and every compatible $v\in V(G)$, $t\in V(T)$. 
Now for every non-isomorphic copy $H\in N_\mathcal{H}(v|t)$, there are many copies isomorphic to $T$ in $N_\mathcal{I}(v|t)$. 
We will use one of these isomorphic copies to improve the $T$-pseudo-decomposition by repairing $H$.
This is done by a subgraph switch operation which is explained in more detail in Step 2.
However, if the trees in $N_\mathcal{I}(v|t)$ overlap too much, then we might not be able to make any switch that improves the $T$-pseudo-decomposition. 
To avoid this, we need to find a large set of isomorphic copies in $N_\mathcal{I}(v|t)$ that pairwise intersect only in $v$. 
To measure how much the pseudo-trees in a $T$-pseudo-decomposition overlap, we use the following concept that was introduced in~\cite{BHLT15+}.

\begin{definition}
Let $\mathcal{P}$ be a collection of pseudo-copies of $T$ in $G$, and $v\in V(G)$ and $t\in V(T)$ be compatible vertices. The \textbf{conflict ratio} of $v$ with respect to $t$, denoted by $\mbox {\rm conf}_{\mathcal{P}}(v|t)$, is defined by
$$\mbox {\rm conf}_{\mathcal{P}}(v|t):=\frac{\max_{u\ne v} \big| \{T\in { N} _\mathcal{P}(v|t) : u \in V(T)\}\big|}{d_\mathcal{P}(v|t)}.$$
\end{definition}

Intuitively, ${\rm conf}_{\mathcal{P}}(v|t)$ measures the maximum proportion of pseudo-copies in $N_\mathcal{P}(v|t)$ in
which some fixed vertex $u$ appears. Clearly, we always have $0\leq {\rm conf}_{\mathcal{P}}(v|t) \leq 1$. If $v$ and $t$ are not compatible, then we set ${\rm conf}_{\mathcal{P}}(v|t)=0$. Globally, we define ${\rm conf}(\mathcal{P}|t):=\max_{v}\mbox {conf}_{\mathcal{P}}(v|t)$ and ${\rm conf}(\mathcal{P}):=\max_{t}\mbox {conf}(\mathcal{P}|t)$.

To ensure that the isomorphic copies in the $T$-pseudo-decomposition $\mathcal{H}\cup \mathcal{I}$ are sufficiently spread out, we also require ${\rm conf}(\mathcal{I}) \leq \delta$ for some given $\delta >0$. In Section 3, we prove that such a $T$-pseudo-decomposition can always be obtained provided the minimum degree in each colour is large enough.

\begin{lemma}\label{dense}
Let $T$ be a tree on $m$ edges and $\varepsilon$, $\delta$ real numbers with $0 < \varepsilon, \delta < 1$.
Let $G$ be a $T$-equitably coloured bipartite graph where the minimum degree in each colour is at least $(10m)^{18} (\varepsilon\delta)^{-6}$.
Then $G$ admits a $T$-pseudo-decomposition $\mathcal{H} \cup \mathcal{I}$, where $\mathcal{I}$ denotes the collection of isomorphic copies of $T$, such that:
\begin{enumerate}
\item[(1)] for every compatible $v\in V(G)$ and $t\in V(T)$, we have $d_\mathcal{H}(v|t)\le \varepsilon d_{\mathcal{I}}(v|t)$;
\item[(2)] ${\rm conf}(\mathcal{I}) \leq \delta$.
\end{enumerate} 
\end{lemma}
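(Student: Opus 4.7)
The plan is a probabilistic construction of $\mathcal{P}=\mathcal{H}\cup\mathcal{I}$. For each $v\in V(G)$ and each $t\in V(T)$ compatible with $v$, I would sample a uniformly random partition of $N_{S(t)}(v)$ into rainbow stars (each containing exactly one edge per colour in $S(t)$), independently across all pairs $(v,t)$. As recalled in the discussion preceding the lemma, these local choices glue together into a $T$-pseudo-decomposition $\mathcal{P}$, which I split into $\mathcal{H}\cup\mathcal{I}$ in the usual way. The aim is to show that with positive probability both (1) and (2) hold simultaneously.

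The first step is an expectation computation. Writing $d$ for the minimum monochromatic degree, the crucial probabilistic observation is that at each vertex $w$ compatible with some $t'\in V(T)$, conditioning on one incident edge $e$ of colour $c\in S(t')$ lying in a star $s$ of the chosen partition, the other entries of $s$ are, for each colour $c'\in S(t')\setminus\{c\}$, a uniformly random edge of colour $c'$ at $w$. Consequently, for any fixed central star $s$ at $(v,t)$ and any tree-vertex $t'\ne t$, the image of $t'$ in the pseudo-copy $P_s$ containing $s$ is determined by a colour-constrained random walk from $v$ of length $\mathrm{dist}_T(t,t')$ whose next vertex at each step is uniform over $\ge d$ candidates. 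Since $G$ is simple, the probability that such a walk ends at a prescribed $u\ne v$ is at most $1/d$. Summing over the at most $m+1$ choices of $t'$ and over the $d_{\mathcal{P}}(v|t)$ central stars yields
\[
\EE\!\left[\,\bigl|\{P\in N_{\mathcal{P}}(v|t):u\in V(P)\}\bigr|\,\right]\;\le\;\frac{m+1}{d}\,d_{\mathcal{P}}(v|t).
\]
A parallel calculation, now summing over the $O(m^2)$ unordered pairs $\{t_1,t_2\}\subseteq V(T)$ whose images in $P$ can coincide (the coincidence being exactly what defines $P\in\mathcal{H}$), gives $\EE[d_{\mathcal{H}}(v|t)]\le O(m^2/d)\,d_{\mathcal{P}}(v|t)$. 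With $d\ge (10m)^{18}(\varepsilon\delta)^{-6}$, both expectations are far smaller than $(\varepsilon/3)\,d_{\mathcal{I}}(v|t)$ and $(\delta/3)\,d_{\mathcal{P}}(v|t)$ respectively.

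The second step is concentration. Both random variables decompose as sums of weakly dependent indicators, one per central star at $(v,t)$, and resampling the partition at any single vertex of $G$ changes each count by at most $O(m)$. A Talagrand-type inequality, or an Azuma--McDiarmid bounded-differences martingale, therefore yields deviation probabilities of the form $\exp(-\Omega(d/\mathrm{poly}(m)))$. Given the large quantitative gap between our expectation bounds and the target thresholds $\varepsilon\, d_{\mathcal{I}}(v|t)$ and $\delta\, d_{\mathcal{P}}(v|t)$ imposed by the hypothesis on $d$, a union bound over all compatible pairs $(v,t)$ and, for property (2), over the relevant $u$ should suffice.

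The hard part, I expect, is precisely this final union bound over $u$: since the hypothesis on $d$ depends only on $m,\varepsilon,\delta$ and \emph{not} on $|V(G)|$, a crude union bound over $|V(G)|$ candidates for $u$ cannot be absorbed. I would circumvent this either by a direct Chernoff-type analysis of $|\{P:u\in V(P)\}|$ as a sum of nearly-independent walk indicators for each fixed $u$ (so that the deviation probability is small enough to survive a $|V(G)|$-factor union bound), or by exploiting the fact that only $u$ within graph-distance at most $\mathrm{diam}(T)\le m$ from $v$ can contribute and that among these only those that are actually reached by the random partitions matter, a number bounded by $d_{\mathcal{P}}(v|t)$. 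Either route should reduce the effective union bound to something polynomial in $d$ and $m$, closing the argument.
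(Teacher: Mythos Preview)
Your expectation estimates are essentially correct, but the concentration-plus-union-bound scheme has a genuine gap that your two proposed workarounds do not close.

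First, the Lipschitz claim is wrong. Resampling the entire random partition at a single pair $(w,t')$ does \emph{not} change $d_{\mathcal H}(v|t)$ by at most $O(m)$: every pseudo-copy in $N_{\mathcal P}(v|t)$ that reaches $w$ in role $t'$ can flip between $\mathcal H$ and $\mathcal I$, and the number of such copies is only bounded by the degree of $w$ in the relevant colour, which can be arbitrarily large. If instead you parametrise the randomness as permutations and look at single transpositions (so that the Lipschitz constant really is $O(1)$), then the number of trials influencing the event at $(v,t)$ is governed by the degrees of all vertices within distance $m$ of $v$, again unbounded in terms of $m,\varepsilon,\delta$.

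Second, and more fundamentally, even granting perfect concentration the union bound cannot be made to work. You correctly flag the union over $u$, but the same obstacle already arises for the union over the pairs $(v,t)$ themselves: there are $|V(G)|$ of them, while the failure probability you can hope for is $\exp(-\Omega(d_{\mathcal P}(v|t)/\mathrm{poly}(m)))$, and $d_{\mathcal P}(v|t)$ may be as small as the minimum colour degree~$d$, which is fixed independently of $|V(G)|$. Your Route~1 therefore fails for the same reason you anticipated for $u$; Route~2 bounds the relevant $u$'s by a polynomial in the local degrees, but those local degrees (the \emph{maximum} degree, not the minimum) are again unbounded, so this does not help with the Local Lemma either: the dependency degree among events indexed by $(v,t)$ would still be $(\text{max degree})^{O(m)}$.

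The paper's proof avoids all of this by an additional structural step you are missing: before randomising, it partitions each monochromatic neighbourhood $N_i(v)$ into \emph{blades} of a fixed bounded size $c\approx (10m)^9(\varepsilon\delta)^{-3}$, groups blades of the colours in $S(t)$ into \emph{fans}, and randomises within each fan separately. Bad events are then defined per fan, and crucially each bad event depends on the randomness in at most $(cm)^{O(m)}$ other fans---a bound in terms of $c$ and $m$ only, independent of $|V(G)|$ and of the maximum degree. With this localisation the symmetric Local Lemma applies (with McDiarmid's inequality supplying the per-fan probability bounds), and the thresholds in~(1) and~(2) are recovered fan by fan.
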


\subsection*{Step 2: Repairing non-isomorphic copies}

For this part of the proof we label the vertices $t_0,\ldots ,t_m$ of $T$ so that, for every $i\in \{1,\ldots m\}$, the subgraph induced by $t_0,\ldots ,t_i$ is connected. Such an ordering can for example be obtained by applying a breadth-first search algorithm from some vertex $t_0$ of $T$.
We also label the edges of $T$ so that $e_i$ denotes the edge joining $t_i$ with $T[t_0,\ldots ,t_{i-1}]$ for every $i\in \{1,\ldots m\}$. 
To indicate at which place a pseudo-copy $H$ fails to be isomorphic to $T$, we introduce the following definitions.

\begin{definition}
Let $H$ be a pseudo-copy of $T$, and let $v_i$ denote the image of $t_i$ in $H$ for every $i\in \{0,\ldots ,m\}$. For $i\in \{1,\ldots ,m\}$, we say that $H$ is \textbf{$i$-good} if the vertices $v_0,\ldots ,v_i$ are pairwise distinct. If $H$ is not $i$-good, then we say that $H$ is \textbf{$i$-bad}.
\end{definition}
Note that since $G$ does not have multiple edges, every pseudo-copy of $T$ in $G$ is 2-good. Moreover, since $G$ is bipartite, every pseudo-copy of $T$ in $G$ is even 3-good. 

The idea is to use isomorphic copies to repair the pseudo-trees that are not isomorphic to $T$. We start by considering all pseudo-trees in $\mathcal{H}$ that are 4-bad. For each such $H$, we will find an isomorphic copy $f(H)$ in $\mathcal{I}$ such that $H\cup f(H)$ can be written as the union of two 5-good pseudo-copies of $T$, say $H_1\cup H_2$. We then remove $H$ from $\mathcal{H}$ and $f(H)$ from $\mathcal{I}$, and add $\{H_1,H_2\}$ to $\mathcal{H}$. 
The technical definition of this so-called \textit{switch} is given below.
We use this operation for all 4-bad pseudo-copies of $T$ in $\mathcal{H}$. Let $\mathcal{H}'\cup \mathcal{I}'$ denote the resulting $T$-pseudo-decomposition, where $\mathcal{I}'$ again contains only isomorphic copies of $T$ and all pseudo-copies in $\mathcal{H}'$ are 4-good. 
We repeat this step, this time repairing all 5-bad pseudo-copies in $\mathcal{H}'$ by using isomorphic copies in $\mathcal{I}'$. We continue like this until we get a $T$-pseudo-decomposition in which all pseudo-copies are $m$-good and thus isomorphic to $T$.

To make sure that we can perform a switch between $H$ and $f(H)$, we need $f(H)$ to satisfy certain properties. Let $v_j$ denote the image of $t_j$ in $H$ for $j\in\{0,\ldots ,m\}$ and suppose $i$ is chosen minimal such that $H$ is $i$-bad. By the choice of our labelling, there exists $i'\in\{0,\ldots, i-1\}$ with $t_{i'}t_i \in E(T)$. To ensure that $v_i$ is distinct from the previous vertices $v_0,\ldots ,v_{i-1}$, we want to choose a different edge corresponding to $e_i$ at $v_{i'}$. Since we take this edge from $f(H)$, we want $f(H)$ to also use the vertex $v_{i'}$ as image of $t_i'$. However, this should be the only point of intersection with $H$ to ensure that both copies will be $i$-good after the switch.

More precisely, for every edge $e_i\in E(T)$, let $T^{i-}$ denote the connected component of $T-e_i$ containing $t_0$. Let $T^{i+}$ be the subgraph of $T$ induced by $E(T)\setminus E(T^{i-})$. If $H$ is a pseudo-copy of $T$, then we denote the images of $T^{i-}$ and $T^{i+}$ under the homomorphism by $H^{i-}$ and $H^{i+}$. Now we are ready to define the switching operation.

\begin{definition}
Let $\mathcal{H}$ be a collection of pseudo-copies of $T$ in $G$ and $i\in\{1,\ldots ,m\}$. Let $t_{i'}$ be the endpoint of the edge $e_i$ that is different from $t_i$. Suppose $H_1,H_2\in N_{\mathcal{H}}(v|t_{i'})$ for some $v\in V(G)$. The \textbf{$i$-switch} of $\{H_1,H_2\}$ is defined by
$${\rm sw}_i(\{H_1,H_2\})=\{H_1^{i+}\cup H_2^{i-},H_1^{i-}\cup H_2^{i+}\}.$$
\end{definition} 

By making an $i$-switch between two pseudo-copies $H$ and $f(H)$, their vertices corresponding to $v_0,\ldots ,v_{i-1}$ remain unchanged. In particular, if both $H$ and $f(H)$ are $(i-1)$-good, then also both copies in ${\rm sw}_i(\{H,f(H)\})$ will be $(i-1)$-good. Moreover, if $H\cap f(H) = \{v_{i'}\}$, then after the switch both pseudo-trees will be $i$-good. Notice that neither of the two new pseudo-trees is necessarily still isomorphic to $T$. In particular, the collection of isomorphic copies might shrink with every step of the repairing process. 

If the pseudo-trees in $\mathcal{I}$ overlap too much, we might not be able to find a single pseudo-tree $f(H)$ in $\mathcal{I}$ with $H\cap f(H) = \{v_{i'}\}$. A sufficiently low conflict ratio of $\mathcal{I}$ ensures that we can find such a function $f:\mathcal{H}\rightarrow \mathcal{I}$. However, to continue this process we also need that the remaining collection of isomorphic copies $\mathcal{I}\setminus f(\mathcal{H})$ has a low conflict ratio. To this end we use the Local Lemma to prove the following lemma in Section 4.

\begin{lemma}\label{lemma-switchset}
Let $T$ be a tree on $m$ edges and $\varepsilon$, $\delta$ positive real numbers with $\varepsilon +\delta m<\frac{1}{2}$ and $\varepsilon m < 1$. Let $\mathcal{H}$ and $\mathcal{H}'$ be collections of pseudo-copies of $T$ in $G$ with ${\rm conf}(\mathcal{H}') \leq \delta$ and $d_{\mathcal{H}'}(v|t)> {\rm max}\{22/\varepsilon^{7},  d_{\mathcal{H}}(v|t)/\varepsilon\}$ for each compatible $v\in V(G)$, $t\in V(T)$. \\
For every $t\in V(T)$, there exists an injective function $f_t:\mathcal{H}\rightarrow \mathcal{H}'$
such that 
\begin{itemize}
\item $f_t(N_{\mathcal{H}}(v|t)) \subset N_{\mathcal{H}'}(v|t)$ for every $v\in V(G)$ compatible with $t$,
\item $H \cap f_{t}(H) = \{v\}$ for every $H\in N_{\mathcal{H}}(v|t)$, and
\item $d_{f_t(\mathcal{H})}(v|t') \leq 3\varepsilon d_{\mathcal{H}'}(v|t')$ for every compatible $v\in V(G)$, $t'\in V(T)$.
\end{itemize}
\end{lemma}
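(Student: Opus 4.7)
I would produce $f_t$ via an independent random choice at each $H$ and then verify by the Lov\'asz Local Lemma that all three bullets hold simultaneously with positive probability. Fix $t\in V(T)$, and for each $H\in\mathcal{H}$ write $u(H)$ for the image of $t$ in $H$. For each $H$ independently, pick $X_H$ uniformly at random from $N_{\mathcal{H}'}(u(H)|t)$, and set $f_t(H):=X_H$; by construction this already satisfies $f_t(N_{\mathcal{H}}(v|t))\subset N_{\mathcal{H}'}(v|t)$ for every $v$. The remaining two bullets, as well as injectivity, are encoded by three families of bad events: $A_H$ (the event that $X_H$ meets $V(H)\setminus\{u(H)\}$), $B_{H,H'}$ for distinct $H,H'\in N_{\mathcal{H}}(u|t)$ (the event $X_H=X_{H'}$), and $C_{v,t'}$ (the load $|\{H:X_H\in N_{\mathcal{H}'}(v|t')\}|$ exceeds $3\varepsilon d_{\mathcal{H}'}(v|t')$). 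Note that injectivity across different $u$'s is automatic, since the target families $N_{\mathcal{H}'}(u|t)$ are pairwise disjoint.

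\textbf{Probability estimates.} Using the conflict bound ${\rm conf}(\mathcal{H}')\le\delta$ together with a union bound over the at most $m$ vertices of $V(H)\setminus\{u(H)\}$ gives $\Pr[A_H]\le m\delta$. Immediately $\Pr[B_{H,H'}]=1/d_{\mathcal{H}'}(u|t)\le \varepsilon^{7}/22$. For $C_{v,t'}$, the key calculation is
\[
\EE\bigl[|\{H:X_H\in N_{\mathcal{H}'}(v|t')\}|\bigr] \;=\; \sum_u \frac{d_{\mathcal{H}}(u|t)}{d_{\mathcal{H}'}(u|t)}\bigl|N_{\mathcal{H}'}(u|t)\cap N_{\mathcal{H}'}(v|t')\bigr| \;\le\; \varepsilon\, d_{\mathcal{H}'}(v|t'),
\]
where the inequality uses $d_{\mathcal{H}}(u|t)\le\varepsilon d_{\mathcal{H}'}(u|t)$ and the fact that each $H'\in\mathcal{H}'$ has a unique image of $t$. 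Since $|\{H:X_H\in N_{\mathcal{H}'}(v|t')\}|$ is a sum of independent $0$--$1$ random variables, a Chernoff bound yields $\Pr[C_{v,t'}]\le \exp(-c\,\varepsilon d_{\mathcal{H}'}(v|t'))$ for an absolute constant $c>1$, which thanks to the assumption $d_{\mathcal{H}'}(v|t')>22/\varepsilon^{7}$ is extremely small.

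\textbf{Local Lemma and main obstacle.} The variables $(X_H)_{H\in\mathcal{H}}$ are mutually independent, so two bad events are dependent only when they share some $X_H$. Events of types $A$ and $B$ have small local dependency and are handled by the asymmetric Lov\'asz Local Lemma in a routine way, with weights $x_A\approx 2m\delta$ and $x_B\approx 2/d_{\mathcal{H}'}(u|t)$. \emph{The main technical obstacle} is the load event $C_{v,t'}$: it is affected by $X_H$ for every $H$ whose target class $N_{\mathcal{H}'}(u(H)|t)$ intersects $N_{\mathcal{H}'}(v|t')$, so its dependency neighbourhood contains up to $\Theta(\varepsilon d_{\mathcal{H}'}(v|t'))$ events of type $A$ and many more of type $B$, causing the product $\prod_{F\sim C_{v,t'}}(1-x_F)$ to shrink like $\exp(-\Theta((m\delta+\varepsilon)\varepsilon d_{\mathcal{H}'}(v|t')))$. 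The LLL inequality for $C_{v,t'}$ nevertheless survives because the Chernoff exponent is strictly larger than this loss, and it is precisely the hypotheses $\varepsilon+\delta m<\tfrac12$ and $\varepsilon m<1$ (together with the lower bound $d_{\mathcal{H}'}(v|t)>22/\varepsilon^{7}$) that supply the needed slack. The LLL then produces a realisation of the $X_H$ avoiding every bad event, and the resulting $f_t$ enjoys all three required properties.
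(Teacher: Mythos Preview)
Your overall plan---sample $f_t(H)$ at random and use the asymmetric Local Lemma---is the right instinct, but the way you set it up does not give you control over the dependency graph, and the claim ``its dependency neighbourhood contains up to $\Theta(\varepsilon d_{\mathcal{H}'}(v|t'))$ events of type $A$'' is not justified. The event $C_{v,t'}$ is affected by every trial $X_H$ for which $N_{\mathcal{H}'}(u(H)\mid t)$ meets $N_{\mathcal{H}'}(v\mid t')$. A single tree in $N_{\mathcal{H}'}(v\mid t')$ lies in some $N_{\mathcal{H}'}(u\mid t)$, and then \emph{all} of the $d_{\mathcal{H}}(u\mid t)$ many pseudo-trees $H$ with $u(H)=u$ contribute a trial to $C_{v,t'}$. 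Since $d_{\mathcal{H}}(u\mid t)$ is only bounded by $\varepsilon\, d_{\mathcal{H}'}(u\mid t)$, and $d_{\mathcal{H}'}(u\mid t)$ has no relation to $d_{\mathcal{H}'}(v\mid t')$, the number of type-$A$ (and type-$B$) events adjacent to $C_{v,t'}$ can be arbitrarily large compared with $d_{\mathcal{H}'}(v\mid t')$. Your Chernoff exponent for $C_{v,t'}$ is of order $\varepsilon\, d_{\mathcal{H}'}(v\mid t')$, whereas the product $\prod(1-x_{A_H})$ over the neighbours decays like $\exp\bigl(-m\delta\sum_u d_{\mathcal{H}}(u\mid t)\bigr)$; for large $d_{\mathcal{H}}(u\mid t)$ this product is far smaller than $\Pr[C_{v,t'}]$, so the LLL inequality fails. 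A symmetric problem occurs in the LLL inequality for $A_H$: it is adjacent to every $C_{w,t''}$ for which some tree of $N_{\mathcal{H}'}(u(H)\mid t)$ passes through $w$, i.e.\ roughly $m\, d_{\mathcal{H}'}(u(H)\mid t)$ load events, again unbounded.

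The paper sidesteps all of this with a deterministic preprocessing step that you are missing: before sampling, it assigns to each $H\in N_{\mathcal{H}}(v\mid t)$ a \emph{private} set $S(H)\subset N_{\mathcal{H}'}(v\mid t)$ of size $\lfloor (1-\delta m)/\varepsilon\rfloor$, consisting only of trees meeting $H$ solely at $v$, with the sets $S(H)$ pairwise disjoint. (This is possible precisely because $d_{\mathcal{H}}(v\mid t)\le \varepsilon\, d_{\mathcal{H}'}(v\mid t)$ and ${\rm conf}(\mathcal{H}')\le\delta$.) Choosing $f_t(H)$ uniformly from $S(H)$ then makes injectivity and the intersection condition automatic, so there are no $A$- or $B$-events at all, and---crucially---disjointness forces each tree of $N_{\mathcal{H}'}(v\mid t')$ to appear in at most one $S(H)$, so $C_{v,t'}$ depends on at most $d_{\mathcal{H}'}(v\mid t')$ trials. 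That single bound is what makes the Local Lemma go through. If you insert this preprocessing step, your argument collapses to essentially the paper's.
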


By using Lemma~\ref{lemma-switchset} with $\mathcal{H}'=\mathcal{I}$, we find a collection $f(\mathcal{H})$ in which the degrees are low compared to the degrees in $\mathcal{I}$. Thus, the conflict ratio of the collection of isomorphic copies only increases by a constant factor after each step of the repairing process. By choosing $\varepsilon$ and $\delta$ sufficiently small, the proof of Theorem~\ref{theorem:main} will follow from Lemma~\ref{dense} and repeated applications of Lemma~\ref{lemma-switchset}. The details can be found at the end of Section 4.

%%%%%%%%%%%%%%%%%%%%%%%%%%%%%%%%%%%%%%%%%%%%%%%%%%%%%%%%%%%%%%%%%%%%%
%%%%%%%%%%%%%%%%%%%%%%%%%%%%%%%%%%%%%%%%%%%%%%%%%%%%%%%%%%%%%%%%%%%%%
%%%%%%%%%%%%%%%%%%%%%%%%%%%%%%%%%%%%%%%%%%%%%%%%%%%%%%%%%%%%%%%%%%%%%
%%%%%%%%%%%%%%%%%%%%%%%%%%%%%%%%%%%%%%%%%%%%%%%%%%%%%%%%%%%%%%%%%%%%%

\section{Finding a good $T$-pseudo-decomposition}

Given a graph with a $T$-equitable colouring and large minimum degree in each colour, we construct a $T$-pseudo-decomposition satisfying the conditions in Lemma~\ref{dense}. 
As described in Step 1 of Section 2, every $T$-equitable colouring gives rise to several $T$-pseudo-decompositions. 
We form the pseudo-copies of $T$ by grouping the edges at every vertex randomly into rainbow stars. 
If the degrees in each colour are large enough, we can ensure that most of the resulting pseudo-trees are isomorphic to $T$ and also the conflict ratio of the resulting $T$-pseudo-decomposition is small. 
The proof of this is essentially an application of the Local Lemma.

\begin{proposition}[Symmetric Local Lemma] \label{prop: SymLovasz}
Let $A_1,..., A_n$ be events in some probability space $\Omega$
with $\PP[A_i] \leq p$ for all $i\in \{1,\ldots ,n\}$.
Suppose that each $A_i$ is mutually independent of all but at most $d$ other events $A_j$. If $4pd < 1$, then $\PP[\cap_{i=1}^{n} \overline{A_i}] > 0$.   
\end{proposition}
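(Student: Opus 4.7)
The plan is to establish, by induction on $|S|$, the stronger claim that for every $i \in \{1,\ldots,n\}$ and every $S \subseteq \{1,\ldots,n\}\setminus\{i\}$,
$$\PP\left[A_i \,\middle|\, \bigcap_{j\in S}\overline{A_j}\right] \leq 2p,$$
with the conditional probability interpreted as $0$ whenever the conditioning event has probability zero. The base case $S=\emptyset$ is immediate from $\PP[A_i]\leq p$. For the inductive step, I would partition $S$ into $S_1 := \{j\in S : A_j \text{ is not mutually independent of } A_i\}$ and $S_2 := S \setminus S_1$, so that $|S_1|\leq d$ by hypothesis. If $S_1=\emptyset$, then $A_i$ is mutually independent of $\{A_j : j\in S\}$ and the bound reduces to $\PP[A_i]\leq p$, so we may assume $|S_2|<|S|$.

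The core computation is then
$$\PP\left[A_i \,\middle|\, \bigcap_{j\in S}\overline{A_j}\right] = \frac{\PP\left[A_i \cap \bigcap_{j\in S_1}\overline{A_j} \,\middle|\, \bigcap_{j\in S_2}\overline{A_j}\right]}{\PP\left[\bigcap_{j\in S_1}\overline{A_j} \,\middle|\, \bigcap_{j\in S_2}\overline{A_j}\right]}.$$
The numerator is at most $\PP[A_i \mid \bigcap_{j\in S_2}\overline{A_j}]$, and since $A_i$ is mutually independent of $\{A_j : j\in S_2\}$, this equals $\PP[A_i]\leq p$. For the denominator, applying a union bound and then the inductive hypothesis to each $j\in S_1$ with conditioning set $S_2$ gives
$$\PP\left[\bigcup_{j\in S_1} A_j \,\middle|\, \bigcap_{k\in S_2}\overline{A_k}\right] \leq |S_1|\cdot 2p \leq 2pd < \tfrac{1}{2},$$
so the denominator exceeds $1/2$. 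Hence the ratio is at most $2p$, completing the induction.

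The statement of the proposition then follows from the chain rule,
$$\PP\left[\bigcap_{i=1}^{n}\overline{A_i}\right] = \prod_{i=1}^{n} \PP\left[\overline{A_i} \,\middle|\, \bigcap_{j<i}\overline{A_j}\right] \geq (1-2p)^{n} > 0,$$
using $2p<1/2$, which follows from $4pd<1$ when $d\geq 1$ (and the case $d=0$ is trivial, as the events are then mutually independent with $p<1$). The main obstacle I anticipate is not analytic but bookkeeping: one must make sure that every conditioning event appearing in the argument has positive probability so that the conditional probabilities are well defined, and that the inductive hypothesis is invoked only on strictly smaller sets. Both are handled automatically, since the lower bound $>1/2$ on each denominator propagates through the same induction and guarantees $\PP[\bigcap_{j\in S}\overline{A_j}]>0$ whenever the claim is needed.
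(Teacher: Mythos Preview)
Your proof is correct: it is the standard inductive argument for the symmetric Lov\'asz Local Lemma, and all the steps go through as written. The only cosmetic point is the edge case $d=0$, where the hypothesis $4pd<1$ no longer forces $p<1$; but this is a quirk of the statement rather than of your argument, and in any nontrivial application (including every use in this paper) one has $d\geq 1$.

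Note, however, that the paper does not give its own proof of this proposition. It is quoted as a standard probabilistic tool (alongside McDiarmid's inequality, the general Local Lemma, and the Simple Concentration Bound) and used without proof in the verification of Lemma~\ref{dense}. So there is no ``paper's proof'' to compare against; your write-up simply supplies the classical argument that the authors take for granted.
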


The bad events in this case are of the form that many copies in $N_{\mathcal{H}}(v|t)$ are either not isomorphic to $T$ or have a vertex different from $v$ in common.
To show that each event occurs with low probability, we make use of an inequality due to McDiarmid \cite{M02} (see also~\cite{MR02}). 

\begin{proposition}[McDiarmid's Inequality (simplified version)]
Let $X$ be a non-negative random variable, not identically 0, which is determined by 
$m$ independent permutations $\Pi_1,..., \Pi_m$.  A \emph{choice} is  the position that a particular element gets mapped to in a permutation. If there exist $d, r >0$ such that 
\begin{itemize}
 %\item changing the outcome of any one trial can affect $X$ by at most $d$; 
 \item interchanging two elements in any one permutation can affect $X$ by at most $d$, and
 \item for any $s>0$, if $X \geq s$ then there is a set of at most $rs$ choices whose outcomes certify that $X\geq s$,
\end{itemize}
then for any $0 \leq \lambda \leq \EE[X]$,
$$ \PP\left[|X-\EE[X]| > \lambda + 60d \sqrt{r\EE[X]}\right] \leq 4 e^{-\tfrac{\lambda^2}{8d^2r\EE[X]}}\,.$$
\end{proposition}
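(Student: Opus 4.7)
The plan is to follow the Talagrand-style martingale approach developed by McDiarmid for permutations. First, I would reduce to the case of a single permutation by tensorization: since $\Pi_1,\ldots,\Pi_m$ are independent, one can condition on all but one permutation, apply the single-permutation bound, and combine (the constants $60$ and $8$ in the statement are generous enough to absorb the accumulated loss across $m$ permutations, since the variance proxies add). So the heart of the matter is to prove the inequality for a single permutation $\Pi$ of $\{1,\ldots,n\}$.

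Next, I would set up a Doob martingale $Z_k := \EE[X\mid \Pi(1),\ldots,\Pi(k)]$ obtained by revealing the permutation one position at a time. The swap-Lipschitz hypothesis yields $|Z_k-Z_{k-1}|\le d$, since revealing $\Pi(k)$ amounts, under an appropriate coupling of two completions of the partial permutation, to performing a single swap of the value at position $k$. A naive Azuma--Hoeffding bound from these differences only produces a deviation of order $d\sqrt{n}$, which is typically far too weak. The crucial improvement to $d\sqrt{r\,\EE[X]}$ comes from the certificate hypothesis: on the event $\{X\ge s\}$, only $rs$ of the $n$ positions actually matter for witnessing this event, so the effective number of ``active'' martingale differences is $O(r\EE[X])$ rather than $n$. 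Concretely, I would bound the exponential moment $\EE[e^{\theta(X-\EE[X])}]$ by splitting according to which positions lie in the random certificate and applying a Chernoff-type optimization with variance proxy of order $d^{2}\,r\EE[X]$, then choose $\theta$ proportional to $\lambda/(d^2 r\EE[X])$ to obtain the advertised exponent $-\lambda^2/(8d^2 r\EE[X])$.

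The main obstacle is the rigorous coupling of the (random) certificate with the martingale differences: the certificate is itself a function of $\Pi$, so one cannot simply restrict Azuma--Hoeffding to a fixed subset of positions. This requires the permutation analogue of Talagrand's convex-distance inequality, which roughly says that if $X \ge s$ and $X(\Pi') < s$, then $\Pi$ and $\Pi'$ must differ on many positions of the certificate, allowing one to control $\PP[X\ge s]\cdot \PP[X< s-\Delta]$ by an exponential in $\Delta^2/(rs)$. The additive correction $60\,d\sqrt{r\EE[X]}$ in the statement is precisely what is needed to absorb the slack between ``median'' and ``expectation'' that arises from this convex-distance step, and the factor $4$ on the right-hand side accounts for combining the upper-tail and lower-tail versions (the latter proved symmetrically by applying the argument to a capped version of $-X$, since the certificate hypothesis is only assumed upward). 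Once this convex-distance bound for permutations is in hand, the rest is a routine exponential-moment computation.
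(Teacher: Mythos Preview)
The paper does not prove this proposition at all: it is quoted as a known tool, with a citation to McDiarmid's paper \cite{M02}, and is then applied as a black box inside the proof of Lemma~\ref{dense}. So there is no ``paper's own proof'' to compare your attempt against.

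As a sketch of how the result is actually established, your outline is broadly in the right spirit --- the heart of the matter is indeed a Talagrand-type isoperimetric inequality for the symmetric group, combined with a median-to-mean conversion that produces the additive slack $60d\sqrt{r\EE[X]}$ and the constant $4$. Two points where your sketch is loose: first, the reduction to a single permutation ``by tensorization'' is not how the argument goes; McDiarmid works directly with the product of symmetric groups, proving the convex-distance inequality on $\prod_i S_{n_i}$ in one shot (the constants do not depend on $m$, so an inductive accumulation over $m$ permutations would not yield the stated bound). Second, the Doob-martingale/Azuma route you start with is not actually what drives the proof; the bounded-difference martingale gives only the crude $d\sqrt{n}$ bound, and the improvement to $d\sqrt{r\EE[X]}$ does not come from restricting Azuma to ``active'' coordinates but from the isoperimetric inequality itself, which controls $\PP[X\ge M+\Delta]\cdot\PP[X\le M]$ directly in terms of the certificate size. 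Your final paragraph gets this right, so the earlier martingale discussion is a detour rather than an error.
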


A necessary condition to apply the Local Lemma is that each event is mutually independent of most other events. To make sure that this is the case, we start the proof by partitioning the edges at each vertex into so-called \textit{fans} of roughly the same size. 
Recall that for $v\in V(G)$ and $t\in V(T)$, we denote by $N_i(v)$ the edges coloured $i$ incident with $v$ in $G$, and by $S(t)$ the set of edges incident with $t$ in $T$.

\begin{proof}[Proof of Lemma~\ref{dense}:]
Set $c=\lceil (10m)^9(\varepsilon\delta)^{-3}\rceil$. 
For every $v\in V(G)$ and colour $i$, we choose $r_{v,i}\in \{0,\ldots ,c-2\}$ such that $d_i(v)\equiv r_{v,i}$ (mod $c-1$). 
Since the minimum degree in each colour in $G$ is greater than $c^2$, we can partition every set $N_i(v)$ into subsets of size $c$ and $c-1$ so that precisely $r_{v,i}$ of them have size $c$. We call these subsets {\it $i$-blades}. 
Note that an edge $uv$ of colour $i$ in $G$ appears both in an $i$-blade of $N_i(u)$ as well as in an $i$-blade of $N_i(v)$, but we do not require these two $i$-blades to have the same size.

For every compatible $t\in V(T)$, $v\in V(G)$, and $i,j\in S(t)$, we have $d_i(v)=d_j(v)$ since the colouring is $T$-equitable. Thus, the number of $i$-blades of size $c$ (respectively, of size $c-1$) in the partition of $N_i(v)$ is equal to the number of $j$-blades of size $c$ (respectively, of size $c-1$) in the partition of $N_j(v)$. 
We can therefore partition the edges of $N_{S(t)}(v)$ into {\it fans}, which are unions of blades of the same size, such that every fan contains precisely one $i$-blade for every $i \in S(t)$. 
In other words, a fan $\varphi$ at a vertex $v$ (with relation to $t$) is a subset of $N_{S(t)}(v)$ of size $c|S(t)|$ or $(c-1)|S(t)|$ such that all colours in $S(t)$ appear $c$ times or $c-1$ times in $\varphi$. We also call $\varphi$ a \textit{$t$-fan} to indicate the colours appearing in $\varphi$. 

For every compatible $t\in V(T)$, $v\in V(G)$, and every $t$-fan $\varphi$ at $v$, we uniformly at random partition the edges in $\varphi$ into rainbow stars of size $|S(t)|$. More precisely, for every $i\in S(t)$ %we consider random permutations $\Pi_{\varphi ,i}$ of the edges of the $i$-blade of $\varphi$. 
we choose a permutation $\Pi_{\varphi ,i}$ independently and uniformly at random from all permutations on $c$ elements (respectively, on $c-1$ elements if the blades of $\varphi$ have size $c-1$). 
By labelling the edges of each blade, each permutation $\Pi_{\varphi ,i}$ corresponds to an ordering of the edges of the $i$-blade of $\varphi$. 
Now we partition the edges of $\varphi$ into stars of size $|S(t)|$ by grouping the edges of different blades that were mapped to the same position. 
In other words, for every $s\in \{1,\ldots ,c\}$ (respectively, $s\in \{1,\ldots ,c-1\}$) we form a star by choosing for every $i\in S(t)$ the edge labelled $\Pi_{\varphi ,i}(s)$ in the $i$-blade of $\varphi$. 
These stars are centered at $v$ and each colour in $S(t)$ appears precisely once.
Note that every edge $uv\in E(G)$ belongs to exactly two stars, one centered at $u$ and one centered at $v$. 
As described in Step~1 in Section~\ref{section:settings}, these stars correspond to a $T$-pseudo-decomposition of $G$ in a canonical way. 
All that remains to show is that there exists an outcome of the random permutations such that the resulting $T$-pseudo-decomposition 
satisfy the conditions (1) and (2) of Lemma~\ref{dense}.

We denote the set of pseudo-trees using edges of a fan ${\varphi}$ by $\mathcal{T}_{\varphi}$. 
Note that $|\mathcal{T}_{\varphi}|$ is either equal to $c-1$ or $c$. Now we formally define what the bad events at a $t$-fan $\varphi$ at a vertex~$v$ are.
Let $A_{\varphi}$ be the event that more than $2m^2 c^{2/3}$ of the pseudo-copies in $\mathcal{T}_{\varphi}$ are not isomorphic to $T$.
Let $B_{\varphi}$ be the event that there exists a vertex $u\in V(G)$ with $u\neq v$ such that more than $2m c^{2/3}$ pseudo-copies in $\mathcal{T}_{\varphi}$ contain $u$. 
Finally, let $C_{\varphi} = A_{\varphi}\cup B_{\varphi}$. We will prove the following two statements.

\begin{description}
\item[Claim 1:] Each $C_{\varphi}$ is mutually independent of all but at most $4(cm)^{2m}$ other events $C_{\psi}$.
\item[Claim 2:] $\PP[C_{\varphi}] < 9(cm)^m m e^{-c^{2/3}/32}$.
\end{description}

\noindent
Before we proceed to prove these claims, let us note that they allow us to use the Local Lemma to get our desired $T$-pseudo-decomposition $\mathcal{H}\cup \mathcal{I}$.
Indeed, since $e^{-x}<\frac{(9m)!}{x^{9m}}$ for $x>0$, we have
\begin{eqnarray*}
4\cdot 4(cm)^{2m}\cdot \PP[C_{\varphi}] & < & 2^8 \cdot (cm)^{3m} \cdot m\cdot e^{-c^{2/3}/32}\\
	& < & 2^{45m+8} \cdot \left(\frac{m}{c}\right)^{3m} \cdot m \cdot (9m)! \\
	& < & \left( 2^{18} \cdot \frac{m}{c} \cdot (9m)^3\right)^{3m} \\
	& < & \left(\frac{10^9 m^4}{c}\right)^{3m}\\
	& < & 1\,,
\end{eqnarray*}
where the last inequality follows from $c \geq (10m)^9$.
Thus, the symmetric version of the Local Lemma yields a $T$-pseudo-decomposition $\mathcal{H}\cup \mathcal{I}$ for which none of the events $C_{\varphi}$ holds. Now $\mathcal{H}\cup \mathcal{I}$ has the desired properties:
\begin{itemize}
\item Since $A_{\varphi}$ does not hold for any $\varphi$, at most $2m^2c^{2/3}$ of the pseudo-copies in $\mathcal{T}_{\varphi}$ are not isomorphic to $T$. Since $c\geq (10m)^9\varepsilon^{-3}$, we have $2m^2c^{2/3} < \frac{\varepsilon}{1+\varepsilon}c$. Thus, less than $\frac{\varepsilon}{1+\varepsilon}c$ of the pseudo-copies in $\mathcal{T}_{\varphi}$ are in $\mathcal{H}$, while at least $\frac{1}{1+\varepsilon}c$ of them are in $\mathcal{I}$. This holds for every $t$-fan at $v$, so we have $d_{\mathcal{H}}(v|t) < \varepsilon d_{\mathcal{I}}(v|t)$.
\item Since $B_{\varphi}$ does not hold for any $\varphi$, there are at most $2m c^{2/3}$ trees in $\mathcal{T}_{\varphi}$ containing a given vertex $u$ different from $v$. As argued above, at least $\frac{c}{1+\varepsilon}$ of the pseudo-copies in $\mathcal{T}_{\varphi}$ are in $\mathcal{I}$. Since $c\geq (10m)^9(\varepsilon\delta)^{-3}$, we have $2m c^{2/3} < \delta \frac{c}{1+\varepsilon}$. Thus, the proportion of trees in $\mathcal{T}_\varphi \cap \mathcal{I}$ containing $u$ is less than $\delta$. This is true for every $t$-fan at $v$, so we have $$\frac{\big| \{H\in { N} _\mathcal{I}(v|t) : u \in V(H)\}\big|}{d_{\mathcal{I}}(v|t)}\le \delta$$ and thus  ${\rm conf}(\mathcal{I}) \le \delta$.
\end{itemize}

It remains to verify Claims 1 and 2.
We begin by proving Claim~1.

\begin{proof}[Proof of Claim 1:]
The structure of $\mathcal{T}_{\varphi}$ depends on permutations in different fans. 
Let $J(\varphi)$ denote the set of fans $\psi$ for which there exists an outcome of the random permutations 
such that $\mathcal{T}_{\varphi}\cap \mathcal{T}_{\psi}$ is non-empty. 
Since each fan consists of at most $cm$ edges, there are at most $cm+(cm)^2+\ldots +(cm)^m$ fans we can reach from $\varphi$ via a path of length at most $m$. 
Thus, 
$$|J(\varphi)|\leq cm+(cm)^2+\ldots +(cm)^m < 2(cm)^m\,.$$ 
This shows that there are at most $2(cm)^m$ fans where the outcome of the permutation affects the structure of $\mathcal{T}_{\varphi}$.
The same calculation shows that each permutation affects the structure of at most $2(cm)^m$ sets $\mathcal{T}_{\psi}$. 
Hence, the event $C_{\varphi}$ is mutually independent of all but at most $4(cm)^{2m}$ other events $C_{\psi}$.
\end{proof}

Before we prove Claim 2, let us introduce more terminology.
Let $t_i$ and $t_j$ be two distinct vertices of $T$. Notice that $t_i$ or $t_j$ could be equal to $t$.
We say that a pseudo-copy $H$ of $T$ is \emph{$(t_i,t_j)$-bad} if the images of $t_i$ and $t_j$ in $H$ are identical.
For a $t$-fan $\varphi$ at a vertex $v$, let $A_{\varphi}(t_i,t_j)$ be the event that the number of $(t_i,t_j)$-bad pseudo-trees in $\mathcal{T}_{\varphi}$ is greater than $2c^{2/3}$. 
For a vertex $u\in V(G)$ with $u\neq v$, let $B_{\varphi}(u|t_i)$ be the event that the number of pseudo-trees in $\mathcal{T}_{\varphi}$ in which $u$ is the image of $t_i$ is greater than $2c^{2/3}$. 
The proof of Claim 2 consists of two parts:

\begin{description}
\item[Claim 2A:] $\PP[A_{\varphi}(t_i,t_j)] < 4e^{-c^{2/3}/32}$ for every $t_i,t_j\in V(T)$ with $t_i\neq t_j$. 
\item[Claim 2B:] $\PP[B_{\varphi}(u|t_i)] < 4e^{-c^{2/3}/8}$ for every $u\in V(G)$, $t_i\in V(T)$ and $u\neq v$.
\end{description}

The proofs of Claims 2A and 2B use McDiarmid's inequality and have a very similar structure. We will therefore present all the details in the proof of Claim 2A, and only point out the differences in the proof of Claim 2B.

\begin{proof}[Proof of Claim 2A:] 
Fix $t_i$ and $t_j$ as different vertices of $T$. 
Let $P_i$ and $P_j$ denote the paths in $T$ from $t$ to $t_i$ and $t_j$. In the case that one is a subpath of the other, we may assume that $P_i$ is contained in $P_j$. 
Let $t_{j'}$ denote the second last vertex of $P_j$ and let $j$ denote the edge joining $t_{j'}$ and $t_j$. 
Now $T-j$ consists of two components, one of which contains $t_j$ while the other one contains $t$, $t_i$, and $t_{j'}$. 

Let $\pi$ be a fixed outcome of all permutations apart from those at the $j$-blades of $t_{j'}$-fans. 
In other words, given $\pi$, we only need to know the outcome of the permutations $\Pi_{\psi ,j}$ for every $t_{j'}$-fan $\psi$ to construct the $T$-pseudo-decomposition.
For any such outcome $\pi$, we will show that the conditional probability $\PP[A_{\varphi}(t_i, t_j)|\pi]$ is at most $4e^{-c^{2/3}/32}$. 
Clearly, since we condition on an arbitrary but fixed event, this uniform bound implies Claim 2A.

Let $T'$ denote the component of $T-j$ containing $t$, $t_i$ and $t_{j'}$, and let $T''$ denote the subgraph of $T$ induced by $E(T)\setminus E(T')$. 
Let $\mathcal{T}'_{\varphi}$ denote the images of $T'$ in the pseudo-trees of $\mathcal{T}_{\varphi}$. 
By fixing $\pi$, the set $\mathcal{T}'_{\varphi}$ is also fixed.
%parts of the pseudo-trees corresponding to $T'$ and $T''$ in the $T$-pseudo-decompositionare are also fixed. 
The permutations of the $j$-blades at the $t_{j'}$-fans only decide how the images of $T'$ and $T''$ get matched at the $t_{j'}$-fans.

Let $\Psi$ denote the set of $t_{j'}$-fans which contain edges of pseudo-copies in $\mathcal{T}_\varphi$. 
Note that also the set $\Psi$ is completely determined by $\pi$. 
Let $X_{\varphi}$ denote the random variable counting the number of $(t_i,t_j)$-bad pseudo-trees in $\mathcal{T}_{\varphi}$ conditional on $\pi$. 
Notice that $X_{\varphi}$ only depends on the random permutations $\Pi_{\psi ,j}$ with $\psi \in \Psi$. 

For each pseudo-tree $H\in \mathcal{T}_{\varphi}'$ at a $t_{j'}$-fan $\psi \in \Psi$, we already know what the image of $t_i$ in $H$ is.
There are $c-1$ or $c$ different images of $T''$ that could get matched to $H$ at $\psi$, each having a distinct vertex as image of $t_j$.
Thus, there are at least $c-1$ different vertices that could be the image of $t_j$ in $H$. 
Since the permutation $\Pi_{\psi ,j}$ is chosen uniformly at random, the probability that $H$ will be part of a $(t_i,t_j)$-bad pseudo-tree is at most $\frac{1}{c-1}$.
Now, by linearity of expectation,
$$\mathbb{E}[X_{\varphi}] \leq |\mathcal{T}_{\varphi}| \cdot \frac{1}{c-1} \leq \frac{c}{c-1}.$$
We will apply McDiarmid's inequality to the random variable $Y_{\varphi}$ defined by $ Y_{\varphi}:= X_{\varphi} + c^{2/3}$. Clearly $\mathbb{E}[Y_{\varphi}] = \mathbb{E}[X_{\varphi}] + c^{2/3}$.
Only the permutations $\Pi_{\psi ,j}$ with $\psi \in \Psi$ affect $X_{\varphi}$ and thus $Y_{\varphi}$. If two elements in one of these permutations are interchanged, then the structure of two pseudo-trees in $\mathcal{T}_{\varphi}$ changes. In particular, the number of $(t_i,t_j)$-bad trees in $\mathcal{T}_{\varphi}$ changes by at most 2. Thus, we can choose $d=2$ in McDiarmid's inequality.

If $Y_{\varphi} \geq s$, then $X_{\varphi} \geq s - c^{2/3}$, and thus at least $s - c^{2/3}$ of the pseudo-trees in $\mathcal{T}_{\varphi}$ are $(t_i, t_j)$-bad. 
Let $H'\in \mathcal{T}_{\varphi}'$ be a part of a pseudo-tree $H$ that is counted by $X_{\varphi}$. Let $v_i$ and $v_j$ denote the images of $t_i$ and $t_j$ in $H$. 
To verify that $H$ is $(t_i, t_j)$-bad, we only need to know which edge in the $j$-blade of $\psi$ gets mapped to the same position as the edges in $H'$ in other blades of $\psi$. 
In other words, the vertex $v_j$ is determined by the position of one element in the permutation $\Pi_{\psi ,j}$, and thus $v_i=v_j$ can be certified by a single outcome.
Thus, $X_{\varphi} \geq s - c^{2/3}$ can be certified by the outcomes of $s - c^{2/3} < s$ choices and we can choose $r=1$ in McDiarmid's inequality.

By applying McDiarmid's inequality to $Y_{\varphi}$ with $\lambda =\EE[Y_{\varphi}]$, $d=2$, $r=1$, we get 
$$\PP\left[|Y_{\varphi}- \EE[Y_{\varphi}]| > \EE[Y_{\varphi}] + 120 \sqrt{\EE[Y_{\varphi}]}~\right] \leq 4 e^{-\tfrac{\EE[Y_{\varphi}]}{32}} \leq 4e^{-\tfrac{c^{2/3}}{32}}\,.$$
Since $c \geq 10^9$ and $\EE[Y_{\varphi}] \leq c^{2/3}+\frac{c}{c-1}$, we have $120\sqrt{\EE[Y_{\varphi}]} < \frac{1}{2}\EE[Y_{\varphi}] $ which implies 
$$\PP\left[X_{\varphi} > 2c^{2/3} \right] = \PP\left[Y_{\varphi} > 3c^{2/3}\right]  < \PP\left[ |Y_{\varphi} - \EE[Y_{\varphi}]| > \frac{3}{2}\EE[Y_{\varphi}] \right]  \leq 4e^{-\tfrac{c^{2/3}}{32}}\,.$$
Now $\PP[A_{\varphi}(t_i,t_j)|\pi] < 4e^{-c^{2/3}/32}$ and Claim 2A follows. 
\end{proof}

%%%

\begin{proof}[Proof of Claim 2B:]
Let $t_i\in V(T)$ be a fixed vertex different from $t$. Let $P$ denote the path from $t$ to $t_i$ in $T$.
Let $t_j$ denote the second last vertex of $P$ and let $i$ denote the edge joining $t_j$ and $t_i$.
Now $T-i$ consists of two components, one of which contains $t$ and $t_j$ while the other one contains $t_i$.
Let $\pi$ be any fixed outcome of all permutations apart from those at the $i$-blades of $t_{j}$-fans. 
We show that the conditional probability $\PP[B_{\varphi}(u|t_i)|\pi ]$ is at most $4e^{-c^{2/3}/8}$. 
As $\pi$ is arbitrary, this implies the general bound $\PP[B_{\varphi}(u|t_i)] < 4e^{-c^{2/3}/8}$. 

Let $X_{\varphi}$ denote the random variable conditional on $\pi$ which counts the number of pseudo-trees in $\mathcal{T}_{\varphi}$ where $u$ is the image of $t_i$. 
The vertex $u$ appears at most once in each $t_j$-fan, so by linearity of expectation we have 
$$\mathbb{E}[X_{\varphi}] \leq |\mathcal{T}_{\varphi}| \cdot \frac{1}{c-1} \leq \frac{c}{c-1}\,.$$

We apply McDiarmid's inequality to the random variable $X_{\varphi} + c^{2/3}$. Swapping two positions in a permutation $\Pi_{\psi ,i}$ can affect $X_{\varphi}$ by at most~$1$ since $u$ is incident to at most one edge of the $i$-blade of $\psi$.
If $X_{\varphi} + c^{2/3} \geq s$, then this can be certified by revealing at most $s$ positions in the random permutations. Thus, applying McDiarmid's inequality to the random variable $X_{\varphi} + c^{2/3}$ with $\lambda =\EE[X_{\varphi}]+c^{2/3}$, $r=1$, $d=1$ yields 
$$\PP\left[X_{\varphi} > 2c^{2/3} \right] \leq 4e^{-c^{2/3}/8}\,.$$
Now $\PP[B_{\varphi}(u|t_i)|\pi] < 4e^{-c^{2/3}/8}$ and Claim 2B follows.
\end{proof}

Now the proof of Claim 2 follows easily from Claims 2A and 2B.

\begin{proof}[Proof of Claim 2:]
By Claim 2A, we have 
$$\PP[A_{\varphi}]\le \PP\left[\bigcup_{\forall i < j}A_{\varphi}(t_i,t_j)\right]  \le \sum_{\forall i < j} \PP\left[A_{\varphi}(t_i,t_j)\right] < 4m^2e^{-c^{2/3}/32}\,.$$
Let $B_{\varphi}(u)$ be the event that the number of pseudo-trees in $\mathcal{T}_{\varphi}$ containing $u$ is greater than $2mc^{2/3}$. Since $u$ cannot be the image of $t$, we have, by Claim 2B,
$$\PP[B_{\varphi}(u)] \leq \PP\left[\bigcup_{\forall i}B_\varphi(u|t_i)\right]\le \sum_{\forall i}\PP[B_\varphi(u|t_i)]< 4m e^{-c^{2/3}/8}\,.$$
Since each fan consists of at most $cm$ edges, there are at most $cm + (cm)^2+\ldots + (cm)^m$ vertices we can reach from $\varphi$ via a path of length at most $m$. Thus, there are less than $2(cm)^m$ vertices $u$ for which $\PP[B_{\varphi}(u)]$ could be positive.
In particular, we have 
$$\PP[B_\varphi]=\PP\left[\bigcup_{\forall u, u\neq v}B_\varphi(u)\right]\le \sum_{\forall u, u\neq v}\PP[B_\varphi(u)]< 8(cm)^m m e^{-c^{2/3}/8}$$
and Claim 2 follows from $\PP[C_\varphi] \leq \PP[A_\varphi] + \PP[B_\varphi]$. 
\end{proof}

This concludes the proof of Lemma~\ref{dense}.
\end{proof}

%%%%%%%%%%%%%%%%%%%%%%%%%%%%%%%%%%%%%%%%%%%%%%%%%%%%%%%%%%%%%%%%%%%%%%%%%%%%%%%%%%%%%%%%%%%
%%%%%%%%%%%%%%%%%%%%%%%%%%%%%%%%%%%%%%%%%%%%%%%%%%%%%%%%%%%%%%%%%%%%%%%%%%%%%%%%%%%%%%%%%%%
%%%%%%%%%%%%%%%%%%%%%%%%%%%%%%%%%%%%%%%%%%%%%%%%%%%%%%%%%%%%%%%%%%%%%%%%%%%%%%%%%%%%%%%%%%%
%%%%%%%%%%%%%%%%%%%%%%%%%%%%%%%%%%%%%%%%%%%%%%%%%%%%%%%%%%%%%%%%%%%%%%%%%%%%%%%%%%%%%%%%%%%
%%%%%%%%%%%%%%%%%%%%%%%%%%%%%%%%%%%%%%%%%%%%%%%%%%%%%%%%%%%%%%%%%%%%%%%%%%%%%%%%%%%%%%%%%%%
%%%%%%%%%%%%%%%%%%%%%%%%%%%%%%%%%%%%%%%%%%%%%%%%%%%%%%%%%%%%%%%%%%%%%%%%%%%%%%%%%%%%%%%%%%%
%%%%%%%%%%%%%%%%%%%%%%%%%%%%%%%%%%%%%%%%%%%%%%%%%%%%%%%%%%%%%%%%%%%%%%%%%%%%%%%%%%%%%%%%%%%
%%%%%%%%%%%%%%%%%%%%%%%%%%%%%%%%%%%%%%%%%%%%%%%%%%%%%%%%%%%%%%%%%%%%%%%%%%%%%%%%%%%%%%%%%%%

\section{Repairing non-isomorphic copies}

Let $\mathcal{H}\cup \mathcal{I}$ be the $T$-pseudo-decomposition given by Lemma~\ref{dense}. 
As described in Step 2 in Section 2, we use copies in $\mathcal{I}$ to repair the pseudo-trees in $\mathcal{H}$ that are not isomorphic to $T$. 
We apply Lemma~\ref{lemma-switchset} to show the existence of a suitable subset of $\mathcal{I}$ to perform the switches.
The proof of Lemma~\ref{lemma-switchset} relies on the following probabilistic tools, see also~\cite{MR02}.

\begin{proposition}[Lov\'{a}sz Local Lemma] \label{prop: Lovasz}
Let $A_1,..., A_n$ be a finite set of events in some probability space $\Omega$,
and suppose that for some $J_i \subset [n]$, $A_i$ is mutually independent of 
$\{A_j : j \notin J_i \cup \{i\}\}$. If there exist real numbers $x_1,..., x_n$ in $(0,1)$ such that 
$\PP[A_i] \leq x_i \prod_{j \in J_i} (1-x_j)$ for every $i\in\{1,...,n\}$, then $\PP[\cap_{i=1}^{n} \overline{A_i}] > 0$. 
\end{proposition}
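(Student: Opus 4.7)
The plan is to follow the standard inductive proof due to Erdős and Lovász, with the key auxiliary statement being a strengthened conditional bound: for every event $A_i$ and every subset $S \subseteq \{1,\ldots,n\} \setminus \{i\}$,
\[
\PP\!\left[A_i \,\Big|\, \bigcap_{j \in S} \overline{A_j}\right] \leq x_i.
\]
Once this is established, the proposition follows by a telescoping product. I would order the indices arbitrarily and write
\[
\PP\!\left[\bigcap_{i=1}^n \overline{A_i}\right] = \prod_{i=1}^n \PP\!\left[\overline{A_i} \,\Big|\, \bigcap_{j<i} \overline{A_j}\right] \geq \prod_{i=1}^n (1-x_i) > 0,
\]
using the bound above applied to $S = \{1,\ldots,i-1\}$ and the fact that each $x_i \in (0,1)$.

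The conditional bound itself I would prove by induction on $|S|$. The base case $S = \emptyset$ is immediate from the hypothesis $\PP[A_i] \leq x_i \prod_{j \in J_i}(1-x_j) \leq x_i$. For the inductive step I would split $S = S_1 \cup S_2$ with $S_1 = S \cap J_i$ and $S_2 = S \setminus J_i$, and rewrite
\[
\PP\!\left[A_i \,\Big|\, \bigcap_{j\in S} \overline{A_j}\right] = \frac{\PP\!\left[A_i \cap \bigcap_{j \in S_1} \overline{A_j} \,\Big|\, \bigcap_{k \in S_2} \overline{A_k}\right]}{\PP\!\left[\bigcap_{j \in S_1} \overline{A_j} \,\Big|\, \bigcap_{k \in S_2} \overline{A_k}\right]}.
\]
For the numerator I would use mutual independence of $A_i$ from the collection $\{A_k : k \in S_2\}$ to bound it by $\PP[A_i] \leq x_i \prod_{j \in J_i}(1-x_j)$. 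For the denominator I would enumerate $S_1 = \{j_1,\ldots,j_r\}$, expand as a product of conditionals, and apply the induction hypothesis (each conditional bound is over a set of size at most $|S|-1$) to bound each factor from below by $1 - x_{j_l}$, giving a lower bound of $\prod_{j \in S_1}(1-x_j) \geq \prod_{j \in J_i}(1-x_j)$. Dividing, the product $\prod_{j \in J_i}(1-x_j)$ cancels, yielding the desired bound $x_i$.

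The main obstacle, and the subtle point that deserves care, is the handling of the degenerate cases in the ratio expansion: when the conditioning event $\bigcap_{k \in S_2} \overline{A_k}$ already has probability zero, or when an intermediate denominator vanishes. I would dispatch this by an easy induction showing that if all $x_i < 1$ then $\PP[\bigcap_{j \in S} \overline{A_j}] > 0$ for every $S$, so every conditional probability that appears is well-defined; this itself follows from the main inductive bound applied to smaller sets. Apart from this bookkeeping, the argument is purely formal, and the final multiplicative lower bound $\prod_{i=1}^n (1-x_i) > 0$ delivers the proposition.
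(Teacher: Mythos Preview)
Your argument is the standard Erd\H{o}s--Lov\'asz inductive proof and is correct; the handling of the conditional ratio and the bookkeeping about nonzero denominators are exactly the points that need care, and you have addressed them. Note, however, that the paper does not prove this proposition at all: it is stated as a known probabilistic tool with a reference to~\cite{MR02}, so there is no paper proof to compare against.
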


\begin{proposition}[Simple Concentration Bound] Let $X$ be a random variable determined by $n$ independent
trials $T_1,..., T_n$ such that changing the outcome of any one trial $T_i$
can affect $X$ by at most $c$. 
\noindent Then
$$ \PP[|X-\mathbb{E}[X]| > \lambda] \leq 2e^{-\lambda^2 / (2 c^2 n)}. $$
\end{proposition}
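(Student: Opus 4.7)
The plan is to deduce the stated inequality from the classical Azuma--Hoeffding bound applied to the Doob martingale associated to $X$. Concretely, I would set $X_0 := \EE[X]$ and, for $i=1,\ldots,n$, define $X_i := \EE[X \mid T_1,\ldots,T_i]$. Because $X$ is determined by $T_1,\ldots,T_n$, one has $X_n=X$, and the tower property makes $(X_i)_{i=0}^n$ a martingale with respect to the filtration generated by the trials. Writing the telescoping sum $X - \EE[X] = \sum_{i=1}^n D_i$ with martingale differences $D_i := X_i - X_{i-1}$, the task reduces to (a) showing $|D_i| \leq c$ almost surely, and (b) running the standard exponential-moment argument.

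For (a), I would exploit independence of the trials. Let $T_i'$ be an independent copy of $T_i$; independence then lets one rewrite
\[
D_i = \EE_{T_{i+1},\ldots,T_n,\,T_i'}\!\big[X(T_1,\ldots,T_{i-1},T_i,T_{i+1},\ldots,T_n) - X(T_1,\ldots,T_{i-1},T_i',T_{i+1},\ldots,T_n)\big].
\]
The bounded-differences hypothesis says that the expression inside the expectation lies in $[-c,c]$ for every fixed choice of the remaining trials, so the same bound survives taking expectations and $|D_i|\leq c$.

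For (b), I would invoke Hoeffding's lemma: every mean-zero random variable $Y$ with $|Y|\leq c$ satisfies $\EE[e^{sY}]\leq e^{s^2c^2/2}$, which follows from convexity of the exponential on $[-c,c]$. Applied conditionally on $T_1,\ldots,T_{i-1}$, this gives $\EE\!\left[e^{sD_i}\mid T_1,\ldots,T_{i-1}\right]\leq e^{s^2c^2/2}$, and iterating the tower property yields $\EE\!\left[e^{s(X-\EE[X])}\right]\leq e^{ns^2c^2/2}$ for every $s\in\mathbb{R}$. Markov's inequality then gives $\PP[X-\EE[X]\geq \lambda]\leq \exp(-s\lambda + ns^2c^2/2)$ for each $s>0$, and optimizing with $s=\lambda/(nc^2)$ yields the one-sided tail bound $e^{-\lambda^2/(2nc^2)}$. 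Applying the same argument to $-X$ and combining via a union bound produces the claimed two-sided inequality with the factor of $2$.

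The main technical point is step (a): the hypothesis controls the sensitivity of $X$ pointwise, whereas the martingale differences $D_i$ are \emph{conditional} averages, so one has to translate pointwise Lipschitz control into a bound on these averages. Independence of the trials is what makes the translation valid, via the coupling with the independent copy $T_i'$; without independence only a weaker bound involving conditional Lipschitz constants would be available. Once the differences are controlled, the rest is the textbook Azuma--Hoeffding computation and presents no real difficulty.
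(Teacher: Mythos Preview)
Your argument is correct: this is the standard Doob-martingale proof of McDiarmid's bounded-differences inequality, and each step (the coupling representation of $D_i$ using an independent copy $T_i'$, Hoeffding's lemma applied conditionally, the tower-property iteration, and the Chernoff optimisation) is carried out properly.

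For comparison with the paper: the paper does not prove this proposition at all. It is stated as a known probabilistic tool with a reference to Molloy and Reed~\cite{MR02}, and is simply invoked later in the proof of Lemma~\ref{lemma-switchset}. So there is no ``paper's own proof'' to compare against; you have supplied a complete proof where the authors chose to quote the literature.
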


\begin{proof}[Proof of Lemma~\ref{lemma-switchset}:]
Consider a pseudo-tree $H \in N_{\mathcal{H}}(v|t)$, and let $u\in V(H)\setminus \{v\}$. Since ${\rm conf}(\mathcal{H}'|t) \leq \delta$, there are no more than $\delta d_{\mathcal{H}'}(v|t)$ trees in $N_{\mathcal{H}'}(v|t)$ containing $u$. Thus, there are at least $(1-\delta m)d_{\mathcal{H}'}(v|t)$ pseudo-copies of $T$ in $N_{\mathcal{H}'}(v|t)$ that intersect $H$ only in $v$. Since $d_{\mathcal{H}}(v|t)\leq \varepsilon d_{\mathcal{H}'}(v|t)$, we can associate a set $S(H)$ of $\lfloor\frac{1-\delta m}{\varepsilon}\rfloor$ pseudo-copies in $N_{\mathcal{H}'}(v|t)$ with each $H \in N_{\mathcal{H}}(v|t)$ such that each element of $N_{\mathcal{H}'}(v|t)$ is contained in at most one of these sets. We define the function $f_{t}$ by choosing $f_t(H)$ uniformly at random from one of the pseudo-trees in $S(H)$. Clearly, any such function will satisfy the first two conditions of Lemma~\ref{lemma-switchset}. All that remains to show is that with positive probability $d_{f_t(\mathcal{H})}(v|t') \leq 3\varepsilon d_{\mathcal{H}'}(v|t')$ holds for every compatible $v\in V(G)$, $t'\in V(T)$.

The value of $d_{f_t(\mathcal{H})}(v|t')$ only depends on the set of pseudo-trees in $N_{\mathcal{H}'}(v|t')$
that are contained in some $S(H)$. Let $\mathcal{H}''$ be the collection of pseudo-copies of $\mathcal{H}'$ that are contained in some $S(H)$.
Clearly, each tree in 
$N_{\mathcal{H}''}(v|t')$ can be matched with exactly one tree in $\mathcal{H}$ and this occurs
with probability $\lfloor\frac{1-\delta m}{\varepsilon}\rfloor^{-1}$. By linearity of expectation,  
$$\mathbb{E}[d_{f_t(\mathcal{H})}(v|t')] =  \left\lfloor\frac{1-\delta m}{\varepsilon}\right\rfloor^{-1} d_{\mathcal{H}''}(v|t')<2\varepsilon  d_{\mathcal{H}'}(v|t')\,.$$ 

Let $A_{v, t'}$ be the event that $d_{f_t(\mathcal{H})}(v|t') > 
3\varepsilon  d_{\mathcal{H}'}(v|t')$. 
Note that $d_{f_t(\mathcal{H})}(v|t')$ is completely determined
by at most $d_{\mathcal{H}''}(v|t')$ independent trials. Since the outcome of each trial can affect $d_{f_t(\mathcal{H})}(v|t')$ by at most 1, the Simple Concentration Bound gives
\begin{align*}
\PP[A_{v,t'}] & < 2e^{- \varepsilon^2 d_{\mathcal{H}''}(v|t')/2}.
\end{align*}

We claim that $A_{v, t'}$ is mutually independent of all but at most $m\lfloor\frac{1-\delta m}{\varepsilon}\rfloor d_{\mathcal{H}''}(v|t')$ other events $A_{v',t''}$. 
Indeed, $A_{v, t'}$ depends on at most $d_{\mathcal{H}''}(v|t')$ random trials, and in each trial we have a choice of $\lfloor\frac{1-\delta m}{\varepsilon}\rfloor$
trees to match. Each tree affects precisely $m$ events other than $A_{v,t'}$.

Now we apply Proposition~\ref{prop: Lovasz} to show that with positive probability none of the events $A_{v, t'}$ occur. Set $x= \frac{\varepsilon^4}{8}$. It is sufficient to show that 
$$x\left(1 - x\right)^{m\lfloor\frac{1-\delta m}{\varepsilon}\rfloor d_{\mathcal{H}''}(v|t')} \geq  \PP[A_{v, t'}]$$
holds for all compatible $v\in V(G)$, $t'\in V(T)$. If $d_{\mathcal{H}''}(v|t') < \left(\frac{2}{\varepsilon}\right)^6$, then $d_{f_t(\mathcal{H})}(v|t') < \left(\frac{2}{\varepsilon}\right)^6 < 3\varepsilon  d_{\mathcal{H}'}(v|t')$, so $\PP[A_{v, t'}]=0$. If $d_{\mathcal{H}''}(v|t') \geq \left(\frac{2}{\varepsilon}\right)^6$, then we have

\begin{eqnarray*}
x\left(1 - x\right)^{m\lfloor\frac{1-\delta m}{\varepsilon}\rfloor d_{\mathcal{H}''}(v|t')} & \geq & x\left(1 - x\right)^{d_{\mathcal{H}''}(v|t')/\varepsilon^2}\\
& \geq & xe^{-2x d_{\mathcal{H}''}(v|t')/\varepsilon^2}\\
& \geq & \PP[A_{v, t'}] \cdot \frac{x}{2}\cdot e^{\frac{\varepsilon^2}{4}d_{\mathcal{H}''}(v|t')}\\
& \geq & \PP[A_{v, t'}] \cdot \left(\frac{\varepsilon}{2}\right)^6 d_{\mathcal{H}''}(v|t')\\
& \geq & \PP[A_{v, t'}]\,.
\end{eqnarray*}

By the Local Lemma, there is a positive probability that none of the bad events occur.
Thus, there exists a function $f_t$ with the desired properties.
\end{proof}

We now have all ingredients for the proof of Theorem~\ref{theorem:main}. 

\begin{proof}[Proof of Theorem~\ref{theorem:main}:]
As described in Step 2 of Section 2, let $t_0,\ldots ,t_m$ be a labelling of the vertices of $T$ such that $ T[t_0,\ldots ,t_i]$  is connected for every $i\in \{1,\ldots m\}$.
We also label the edges of $T$ so that $e_i$ denotes the edge joining $t_i$ with $T[t_0,\ldots ,t_{i-1}]$ for every $i\in \{1,\ldots m\}$. 
Set $\varepsilon_i = 5^{i-m}/15m$ for $i\in \{1,\ldots ,m\}$. We are going to construct a sequence $(\mathcal{H}_{i} \cup \mathcal{I}_{i})_{i=1}^{m}$ of $T$-pseudo-decompositions of $G$ such that the following holds:
\begin{itemize}
\item $\mathcal{I}_i$ is a collection of isomorphic copies of $T$ for every $i\in \{1,\ldots m\}$; 
\item $\mathcal{H}_i$ is $i$-good for every $i\in \{1,\ldots m\}$;
\item $d_{\mathcal{I}_i}(v|t) > {\rm max}\{22/\varepsilon_i^{7}, d_{\mathcal{H}_i}(v|t)/\varepsilon_i \} $ for every compatible $v\in V(G)$, $t\in V(T)$;
\item ${\rm conf}(\mathcal{I}_i) \leq \varepsilon_i$ for every $i\in \{1,\ldots ,m\}$.
\end{itemize}

Since the minimum degree in each colour in $G$ is at least $10^{50m}$, we can apply Lemma~\ref{dense} with parameters $\varepsilon = \delta = 10^{-2m}$.
Let $\mathcal{H} \cup \mathcal{I}$ denote the resulting $T$-pseudo-decomposition.
Clearly $\mathcal{H} \cup \mathcal{I}$ satisfies the conditions for $\mathcal{H}_1\cup \mathcal{I}_1$. 
Let $i\in \{2,\ldots ,m\}$ and suppose we have constructed $\mathcal{H}_{i-1}\cup \mathcal{I}_{i-1}$ such that the conditions above are satisfied. 
We need to repair the pseudo-trees in $\mathcal{H}_{i-1}$ that are not $i$-good. 
Since the pseudo-trees in $\mathcal{H}_{i-1}$ are all $(i-1)$-good, we can achieve this by making $i$-switches. 
Let $t_j$ be the endpoint of $e_i$ that is different from $t_i$. Let $f_j:\mathcal{H}_{i-1}\rightarrow \mathcal{I}_{i-1}$ be the function we get by applying Lemma~\ref{lemma-switchset} with $\mathcal{H}=\mathcal{H}_{i-1}$, $\mathcal{H'}=\mathcal{I}_{i-1}$, $\varepsilon =\delta = \varepsilon_{i-1}$, and $t=t_{j}$.
Now $f_j(\mathcal{H}_{i-1})$ is the set of trees we use to repair the pseudo-trees in $\mathcal{H}_{i-1}$ that are not $i$-good. 
Set 
\begin{eqnarray*}
\mathcal{H}_i & = & \bigcup_{H\in \mathcal{H}_{i-1}} {\rm sw}_i(H,f_j(H))\,\mbox{ and} \\
\mathcal{I}_i & = & \mathcal{I}_{i-1}\setminus f_j(\mathcal{H}_{i-1})\,,
\end{eqnarray*}
where ${\rm sw}_i(H,f_j(H))$ denotes the $i$-switch of $H$ and $f_j(H)$ as defined in Section 2.
Since $H \cap f_j(H) = \{v\}$ for every $H\in N_{\mathcal{H}_{i-1}}(v|t_{j})$, the two pseudo-copies in ${\rm sw}_i(H,f_j(H))$ are both $i$-good. 

Notice that the degree $d_{\mathcal{H}_i}(v|t)$ of a vertex is invariant under $i$-switches between pseudo-trees in $\mathcal{H}_i$.
Since $d_{f_j(\mathcal{H}_{i-1})}(v|t) \leq 3\varepsilon_{i-1} d_{\mathcal{I}_{i-1}}(v|t)$ holds for compatible $v\in V(G)$ and $t\in V(T)$, we have 
$d_{\mathcal{I}_{i}}(v|t)\geq (1-3\varepsilon_{i-1})d_{\mathcal{I}_{i-1}}(v|t)$
and
$d_{\mathcal{H}_{i}}(v|t)\leq 4\varepsilon_{i-1} d_{\mathcal{I}_{i-1}}(v|t)\,.$
Thus, 
$$d_{\mathcal{H}_{i}}(v|t) \leq \frac{4\varepsilon_{i-1}}{(1-3\varepsilon_{i-1})}d_{\mathcal{I}_{i}}(v|t) \leq 5\varepsilon_{i-1} d_{\mathcal{I}_{i}}(v|t) =\varepsilon_i d_{\mathcal{I}_{i}}(v|t)\,,$$
$$d_{\mathcal{I}_{i}}(v|t) \geq (1-3\varepsilon_{i-1})d_{\mathcal{I}_{i-1}}(v|t) \geq 22\frac{1-3\varepsilon_{i-1}}{\varepsilon_{i-1}^{7}} \geq \frac{22}{\varepsilon_{i}^7}$$
and
\begin{eqnarray*}
{\rm conf}(\mathcal{I}_i) & \leq & \frac{{\rm conf}(\mathcal{I}_{i-1})}{1-3\varepsilon_{i-1}} \leq \frac{5}{4}\varepsilon_{i-1} < \varepsilon_i\,.
\end{eqnarray*}
Hence, the $T$-pseudo-decomposition $\mathcal{H}_i\cup \mathcal{I}_i$ has the desired properties. In particular, $\mathcal{H}_m$ is $m$-good and $\mathcal{H}_{m} \cup \mathcal{I}_{m}$ is a $T$-decomposition of $G$.% and $\mathcal{H}_m\cup \mathcal{I}_m$ is a $T$-decomposition of $G$.
\qedhere
\end{proof}

\end{document}